\numberwithin{equation}{section}
\newtheorem{theorem}{Theorem}[section]
\newtheorem{lemma}[theorem]{Lemma}
\newtheorem{proposition}[theorem]{Proposition}
\theoremstyle{definition}
\newtheorem{definition}[theorem]{Definition}
\newtheorem{assumption}{Assumption}
\theoremstyle{remark}
\newtheorem{remark}[theorem]{Remark}
\numberwithin{equation}{section}
\newcounter{saveeqn}
\newcommand{\ba}{\begin{array}}
\newcommand{\ea}{\end{array}}
\newcommand{\dvg}{{\rm div\;}}
\newcommand{\bea}{\begin{eqnarray*}}
\newcommand{\eea}{\end{eqnarray*}}
\newcommand{\bean}{\begin{eqnarray}}
\newcommand{\eean}{\end{eqnarray}}
\def\B{\mathcal{B}}
\def\R{\mathbb{R}}
\def\C{\mathbb{C}}
\def\N{\mathbb{N}}
\def\h{\mathfrak{h}}
\def\S{\mathbb{S}}
\def\n{\nabla}
\def\div{\mbox{div}}
\def\dist{\mathrm{dist}}
\def\a{\alpha}
\def\b{\beta}
\def\d{\delta}
\def\e{\varepsilon}
\def\g{\gamma}
\def\t{\theta}
\def\p{\partial}
\def\ds{\displaystyle}
\title[Stability for an inverse scattering problem and more applications]{Stable determination by a single measurement, scattering bound and regularity of transmission eigenfunction}
\author{Hongyu Liu}
\address{Department of Mathematics, City University of Hong Kong, Kowloon, Hong Kong SAR, China}
\email{hongyu.liuip@gmail.com, hongyliu@cityu.edu.hk}
\author{Chun-Hsiang Tsou}
\address{Department of Mathematics, City University of Hong Kong, Kowloon, Hong Kong SAR, China}
\email{chun.hsiang.tsou@gmail.com}
\date{} % Activate to display a given date or no date (if empty),
\begin{document}

\begin{abstract}

In this paper, we study an inverse problem of determining the cross section of an infinitely long cylindrical-like material structure from the transverse electromagnetic scattering measurement. We establish a sharp logarithmic stability result in determining a polygonal scatterer by a single far-field measurement. The argument in establishing the stability result is localised around a corner and can be as well used to produce two highly intriguing implications for invisibility and transmission resonance in the wave scattering theory. In fact, we show that if a generic medium scatterer possesses an admissible corner on its support, then there exists a positive lower bound of the $L^2$-norm of the associated far-field pattern. For the transmission resonance, we discover a quantitative connection between the regularity of the transmission eigenfunction at a corner and its analytic or Fourier extension.

\medskip

\noindent{\bf Keywords:}~~ Inverse medium scattering, single far-field pattern, logarithmic stability, corner singularity, invisibility, transmission resonance. 

\noindent{\bf 2010 Mathematics Subject Classification:}~~35R30, 78A46, 35Q60, 35P25, 

\end{abstract}

\maketitle

\section{Introduction}
\subsection{Mathematical setup and physical background}

Initially focusing on the mathematics but not the physics, we introduce the scattering system for our study. Let $k\in\mathbb{R}_+$ be a wavenumber and $u^i$ be an entire solution to the Helmholtz equation,
\begin{equation}\label{eq:helmholtz_inc}
\Delta u^i + k^2 u^i=0\quad\mbox{in}\ \ \mathbb{R}^2. 
\end{equation}
Let $\e, \mu\in L^\infty(\mathbb{R}^2)$ be real-valued functions such that $\mbox{supp}(\e-1), \mbox{supp}(\mu-1)\subset D$ and $\e, \mu\geq c_0\in\mathbb{R}_+$, where $D$ is a bounded Lipschitz domain such that $\mathbb{R}^2\backslash\overline{D}$ is connected. Consider the following scattering system for $u=u^i+u^s\in H_{loc}^1(\mathbb{R}^2)$: 
\begin{equation}\label{eq:aa1}
\begin{cases}
\div (\e^{-1} \n u)+k^2 \mu u=0 & \mbox{in }\R^2,\medskip\\
r^{1/2}(\p_r-\mathrm{i}k)(u-u^i) \rightarrow 0 & \mbox{while } r:=|x|\rightarrow \infty,
\end{cases}
\end{equation}
where $\mathrm{i}:=\sqrt{-1}$. 

In the physical context, \eqref{eq:aa1} describes the transverse magnetic scattering due to the impingement of an incident field $u^i$ on an infinitely long cylindrical material structure whose cross section is $D$; see e.g. \cite{LL1} for more detailed discussion. Here, $\e$ and $\mu$ are respectively the electric permittivity and magnetic permeability, which characterize the medium parameters. The medium outside the material structure is uniformly homogeneous. $u^s$ signifies the perturbation of the incident field due to the presence of the inhomogeneous medium and is referred to as the scattered field. The last limit in \eqref{eq:aa1} is called the Sommerfeld radiation condition which characterizes the outgoing nature of the scattered field. The well-posedness of the scattering system \eqref{eq:aa1} is known (cf. \cite{LSSZ,McL}), and moreover the following asymptotic expansion holds (cf. \cite{ColtonKress}):
\begin{equation}\label{eq:asympt_far}
u(x)=u^i(x)+\frac{e^{\mathrm{i}k|x|}}{|x|^{1/2}}u_\infty(\hat{x};u^i)+\mathcal{O}(|x|^{-1})\ \ \mbox{as}\ \ |x|\rightarrow+\infty,
\end{equation}
which holds uniformly in the angular variable $\hat x:=x/|x|\in\S^1$. The function $u_\infty : \S^1\rightarrow \C$ is called the \textit{far-field pattern} to the scattering problem.

It is noted that the transverse electric scattering can be described similarly with the first equation in \eqref{eq:aa1} replaced by $\dvg (\mu^{-}\nabla u)+k^2\e u=0$. In order to unify our discussion, we introduce the following scattering system 
\begin{equation}\label{eq:helmholtz_total}
\begin{cases}
\div (\sigma \n u)+k^2 q u=0 & \mbox{in }\R^2,\medskip\\
r^{1/2}(\p_r-\mathrm{i}k)(u-u^i) \rightarrow 0 & \mbox{while } r\rightarrow \infty, 
\end{cases}
\end{equation}
where $(\sigma, q)$ may be either $(\e^{-1},\mu)$ or $(\mu^{-1}, \e)$. The inverse scattering problem that we are concerned with associated with the scattering system \eqref{eq:helmholtz_total} in this article is to recover $D$, independent of $\sigma$ and $q$, by knowledge of $u^\infty$ corresponding a single incident field $u^i$, namely
\begin{equation}\label{eq:ip1}
u^\infty\ \mbox{corresponding to a fixed }u^i \longrightarrow D, \ \mbox{independent of } \sigma \mbox{ and }q. 
\end{equation}

\subsection{Summary of main results and connection to existing studies}

The inverse problem \eqref{eq:ip1} is a fundamental one in the inverse scattering theory for electromagnetic waves, and there are rich results in the literature (cf. \cite{ColtonKress,LZr,30Calderon}). However, the determination by a single far-field measurement in the general case still remains to be an open problem; see \cite{Liu} and the references cited therein for more relevant discussions. For the convenience of the readers to have a global picture of our result, we briefly summarize the major discovery in this article in what follows. 

Suppose that $\sigma$ is of the form:
\begin{equation}\label{eq:sigma_form}
\sigma:=1+(\gamma-1)\chi_D 
\end{equation} 
where $\gamma\neq 1$ is a positive constant and $q\in L^\infty$ with $\mbox{supp}(q-1)\subset D$. We write $(D; \g, q)$ to signify such a medium scatterer. The main stability result established in this paper for the inverse problem \eqref{eq:ip1} can be stated as follows. 
\begin{theorem}\label{main_th_simp}
Let $(D; \g, q)$ and $(D'; \g', q')$ be two medium scatterers as described above, where $D$ and $D'$ are assumed to be convex polygons in $\R^2$. Let $u$ and $u'$ the corresponding solutions to the scattering system \eqref{eq:helmholtz_total} associated with $(D; \g, q)$ and $(D'; \lambda', q')$ respectively. Suppose that the solutions $u$ and $u'$ as well as the incident field $u^i$ satisfy the admissible assumptions, which will be detailed in Theorem \ref{main-theorem}. Denote by $u_\infty$ and by $u'_\infty$ the far-filed patterns respectively, and suppose that 
\begin{equation}\label{eq:aa2}
\| u_\infty - u'_\infty \|_{L^2(\S^1)} \leq \e,
\end{equation}
where $\e\in\mathbb{R}_+$ and $\e\ll 1$. 
Then there exist constants $C,\b>0$ which depend only on the {\it a-priori} parameters such that when $\e>0$ is sufficiently small,
\begin{equation}\label{eq:aa3}
d_\mathcal{H}(D, D') \leq C (\ln|\ln\e|)^{-\b}.
\end{equation}
Here $d_\mathcal{H}$, defined later by \eqref{eq:def_haus}, signifies the Hausdorff distance between two domains.
\end{theorem}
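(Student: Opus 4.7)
The proof is a quantitative version of the single-measurement uniqueness result for polygonal scatterers, combining propagation of smallness from the far field to a corner of the symmetric difference $D\,\triangle\, D'$ with a quantitative form of the Bl\aa sten--Liu corner-scattering identity. Our plan has three steps.

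\textbf{Step 1 (Localization at a corner).} Set $d := d_{\mathcal H}(D,D')$. Since $D$ and $D'$ are convex polygons, the Hausdorff distance is realized at a vertex, so without loss of generality there is a corner $x_0$ of $D$ with $B_{c_1 d}(x_0)\cap\overline{D'}=\emptyset$, where $c_1>0$ depends only on the minimum interior angle permitted by the admissible class. In a smaller ball $B_h(x_0)$ with free parameter $h\leq c_1 d$, the scatterer $D'$ is invisible, so $u'$ satisfies the unperturbed Helmholtz equation there with $\sigma'=q'\equiv 1$.

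\textbf{Step 2 (Propagation of smallness and corner identity).} Write $w:=u-u'$. The expansion \eqref{eq:asympt_far} together with \eqref{eq:aa2} and Rellich's lemma give $\|w\|_{L^2(B_{2R}\setminus B_R)}\leq C\e$ for $R$ larger than $\mathrm{diam}(D\cup D')$. Iterating three-spheres inequalities along a chain of balls in the connected exterior $\R^2\setminus\overline{D\cup D'}$, combined with interior elliptic regularity, delivers
\begin{equation*}
\|w\|_{H^1(B_{h/2}(x_1))} \;\leq\; C\,\bigl(\ln(1/\e)\bigr)^{-\mu}
\end{equation*}
at a point $x_1$ near $x_0$ but outside $D\cup D'$, for some $\mu>0$ depending on the geometry. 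Inside the corner sector $D\cap B_h(x_0)$ we then test the equations satisfied by $u$ and $u'$ against a CGO harmonic exponential $v_\tau(x)=e^{\tau d\cdot (x-x_0)}$ with $d\in\C^2$, $d\cdot d=0$, and $\Re d\cdot(x-x_0)<0$ throughout the sector. Integration by parts, using the transmission conditions on $\p D$ together with the Step~2 bound on $\{|x-x_0|=h\}\cap\p D$, yields an identity of the shape
\begin{equation*}
(\g-1)\,u(x_0)\,I(\tau) \;=\; \mathcal R_{\mathrm{bdry}}(\tau,h,\e)+\mathcal R_{\mathrm{osc}}(\tau,h),
\end{equation*}
where $|I(\tau)|\geq c\,\tau^{-2}$ by an explicit sector computation, $|\mathcal R_{\mathrm{bdry}}|\leq C\,e^{C\tau h}\,(\ln(1/\e))^{-\mu}$ via Step~2, and $|\mathcal R_{\mathrm{osc}}|\leq C\,\tau^{-2}h^\a$ via H\"older regularity of $u$.

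\textbf{Step 3 (Balancing and main obstacle).} The admissibility assumptions on $u^i$ referenced in Theorem \ref{main-theorem} guarantee a lower bound $|u(x_0)|\geq c_0>0$. Inserting this into the identity above produces
\begin{equation*}
c_0 \;\leq\; C_1\,h^\a \;+\; C_2\,\tau^{2}\,e^{C\tau h}\,\bigl(\ln(1/\e)\bigr)^{-\mu}.
\end{equation*}
Choosing $\tau h$ to be a small fixed constant and tuning $h$ as a suitable negative power of $\ln|\ln\e|$ forces $d\leq h\leq C(\ln|\ln\e|)^{-\b}$, which is \eqref{eq:aa3}. The main obstacle is making the Bl\aa sten--Liu vanishing argument fully \emph{quantitative}, with explicit and sharp control of the $\tau$-dependence, so that the exponential factor $e^{C\tau h}$ in $\mathcal R_{\mathrm{bdry}}$ does not swallow the logarithmic smallness inherited from the three-spheres chain of Step~2; accommodating the rough $L^\infty$ coefficient $q$, as opposed to the piecewise-constant $\sigma$, inside this CGO scheme is the remaining delicate technical point.
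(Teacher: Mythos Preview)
Your outline is modeled on the Bl\aa sten--Liu argument for perturbations of the \emph{zeroth-order} coefficient $q$ (where $\sigma\equiv 1$), and this is the source of a genuine gap. In the present setting the jump sits in the \emph{leading} coefficient $\sigma=1+(\gamma-1)\chi_D$, so the solution $u$ is not $H^2$ near the corner $x_0$: it carries a genuine algebraic singularity $u=Kr^\eta\phi(\theta)+u_{\mathrm{reg}}$ with $0<\eta<1$ determined by $\gamma$ and the aperture (Theorem~\ref{theo_decom_sol}). Consequently the integration by parts does not produce a volume term with leading part $(\gamma-1)\,u(x_0)\,I(\tau)$ and $|I(\tau)|\gtrsim\tau^{-2}$; instead one lands on a boundary term $(1-\gamma)\int_{\Gamma^\pm}u_0\,\p_\nu u\,ds$ along the interface (Proposition in Section~\ref{LocalA}), and the informative piece is $\int_{\Gamma^\pm_\infty}u_0\,\p_\nu u_{\mathrm{sing}}\,ds$, whose sharp lower bound is $|K|\,\Gamma(\eta)\sin(a\eta)\,\tau^{-\eta}$. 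In particular the admissibility hypothesis that is actually invoked in Theorem~\ref{main-theorem} is \emph{not} $|u(x_0)|\geq c_0$ but the nondegeneracy $K\neq 0$ of the singularity coefficient (Assumption~\ref{def_admis_inc}); your Step~3 inequality therefore rests on an assumption the paper never makes and on an identity that does not arise here.

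There is a second, related issue in your remainder bookkeeping. Because $u\notin H^2$ near $x_0$, you cannot control $\mathcal R_{\mathrm{osc}}$ by ``H\"older regularity of $u$'' alone with an $h^\alpha$ gain; the singular part $Kr^\eta\phi(\theta)$ must be explicitly split off and handled via the exact Laplace-type integral along $\Gamma^\pm_\infty$, while only the $H^2$ regular part $u_{\mathrm{reg}}$ feeds the standard CGO remainder estimates. Likewise, because $\nabla(u-u')$ blows up like $r^{\eta-1}$ near the other scatterer's corners, your propagation-of-smallness step needs to control $|x-x_c|\nabla(u-u')$ in a H\"older class rather than $\nabla(u-u')$ directly (this is exactly what Proposition~\ref{propagation2} is set up to do). Without the corner singularity decomposition driving both the lower bound (through $K$ and $\tau^{-\eta}$) and the upper bounds (through $u_{\mathrm{reg}}\in H^2$), the balancing in Step~3 does not close.
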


It is known that stability estimates of logarithmic type are generically optimal for inverse shape problems (cf. \cite{LMRX,LRX,Linew,Blasten2020}). Hence, it is unobjectionable to claim that our stability result in Theorem~\ref{main_th_simp} is sharp. As mentioned earlier, the determination by a single far-field measurement is a challenging problem in the literature. Our study is closely related to two recent works \cite{cakoni2019,Blasten2020}. In \cite{Blasten2020}, a similar stability estimate was established, but for the case $\sigma\equiv 1$. That is, the determination of the inhomogeneous support of the coefficient in the zeroth-order term of the elliptic PDE in \eqref{eq:helmholtz_total}. In \cite{cakoni2019}, the general case with an inhomogeneous coefficient appearing in the leading-order term was considered, but only a qualitative uniqueness result was established. Hence, our result sharply quantify the uniqueness result in \cite{cakoni2019}. We would like to point out that in \cite{Blasten2020} and \cite{cakoni2019}, both two and three dimensions are considered, whereas in the current article we only consider the two-dimensional case. Moreover, in \cite{cakoni2019}, the coefficient $\gamma$ can be variable but fulfilling certain conditions, whereas in our study, $\gamma$ is assumed to be a constant. There are technical reasons for us doing so as discussed in what follows. 

The mathematical strategy developed in this article is partly inspired from the method used in \cite{Blasten2020}. It begins with an integral identity, which can be established by applying Green's formula on a neighborhood of the scatterer. The second step is to construct a suitable family of functions, which are referred to as the Complex-Geometrical-Optics (CGO) solutions. The third step is to estimate the two sides of the integral identity in order to give an estimation of the geometrical parameters in terms of the estimation of the solutions near the scatterer. We shall use the quantitative estimation of the unique continuation property \cite{Alessandrini} to give an estimation of solutions near the scatterer in terms of the far-field patterns. We finally shall choose delicately the right CGO solution to derive our main stability result. It is particularly noted that there is a challenging point comes from the discontinuity of the leading-order coefficient (cf. \eqref{eq:sigma_form}). To overcome this difficulty, we shall make use of the corner singularity decomposition theory (see Section \ref{Assumptions} for more details). It is noted that the corner singularity in three dimensions can be much more complicated, which include both the edge and the vertex singularities. This makes the singular behaviours of the solutions to elliptic PDEs radically more complicated. This is also the main reason that we are confined within the two dimensions in the current study. It is also the technical reason that we assume that $\gamma$ in \eqref{eq:sigma_form} is constant, since otherwise the induced singularity in the solution due to the singularity of the coefficient coupled with the geometric singularity can be highly complicated. That is, as long as the singularity decomposition theory is well understood, our study can be extended to more general scenarios in high dimensions with variable coefficients. Nevertheless, as can be seen from our subsequent analysis that in principle, we only make use of the fact that $\gamma$ is a constant in a neighbourhood of any corner on $\partial D$ and it can be variable in the rest part of $D$. However, in order to ease the exposition, we assume that $\gamma$ is constant in the whole domain. Here, we would also like to mention in passing some related studies for the so-called Calderon's inverse problem \cite{moi5,moi6} and inverse obstacle problems \cite{CDLZ1,CDLZ2,DLW1,DLW2,DLZZ,LLSS,LLW,Liu2,LYZ,LZ1}.

In addition to its significance in the inverse scattering theory, the stability result can yield interesting implications to the invisibility which is a topic that has received considerable attentions recently. It is shown in \cite{BPS} that if $\mbox{supp}(q)$ (assuming $\sigma\equiv 1$) possesses a corner, then it scatters every incident field nontrivially, namely the corresponding far-field pattern cannot identically vanish. This result is further quantified in \cite{Blasten2020} by showing the corner scatters stably in the sense that the energy of the far-field pattern possesses a positive lower bound. As an immediate consequence of Theorem~\ref{main_th_simp}, one can show that a convex polygonal material structure $(D; \sigma, q)$ scatters stably. The most interesting part is that we can show such a stable scattering result for a scatterer with a generic shape as long as it possesses a corner since our argument in establishing Theorem~\ref{main_th_simp} can be localized around the corner. Another highly intriguing implication is about the analytic or Fourier extension of the transmission eigenfunction. We make a novel discovery on a quantitative relation between the regularity of the transmission eigenfunctions at a corner and their analytic or Fourier extensions.  We shall present more details in Section~5. These intriguing topics have received considerable attentions recently in the literature, and we refer to \cite{B,BLLW,BL1,BL3,curvature2018,BLX2020,CDL,CDHLW,CLW,DDL,DJLZ,DLWW,DCL,DLW,DLS,LX,SPV,SS} as well as a survey paper \cite{Liu} for more existing developments in different physical contexts. 

The rest of this paper is organized as follows. In Section \ref{Assumptions}, we first discuss the corner singularity decomposition, which is a critical ingredient in our study. Then we introduce some admissibility assumptions on the geometric setup as well as the incident fields. In Section \ref{Propagation}, we establish a quantitative estimation of the difference between two wave functions near the scatterer knowing that their far-field patterns are close enough. In Section \ref{Proof_M}, we focus our analysis on a neighborhood of a vertex on the scatterer. We establish in this region the fundamental integral identity. Then by a highly delicate analysis, we derive our stability estimate. In Section \ref{sec:scatter_stable}, we study the stable scattering due to a corner as well as the transmission resonance, which are quantitative byproducts of our stability study.

\section{Geometrical setup and statement of the main results}\label{Assumptions}

\subsection{Corner singularity decomposition}
One of the key ingredients to derive the stability theorem is the local decomposition of solutions to transmission problems in a neighborhood of each polygonal vertex. This is known from the pioneering works of Kondrat'ev \cite{Kon67}, Grisvard \cite{Grisvard} and from Kozlov, Maz'ya, Rossemann \cite{Kozlov} that the solution to a boundary value problem $\Delta u=f$ in a domain with a polygonal corner can be written as the sum of a regular part and a singular part. The singular part is composed of a discrete family of special functions which are explicit in terms of the underlying geometrical parameters. This result is extended to transmission problems, and we refer to Kellogg \cite{Kellogg1}, Dauge and Nicaise \cite{dauge1989oblique,nicaise1990polygonal}. For the boundary value problem associated with the Helmholtz equation system, the same decomposition as the Laplacian case holds \cite{Chaumont-Frelet2018}. The transmission problems for the Helmholtz systems are also studied by Costabel \cite{Costabel1985} in a framework of the layer potential techniques. We summarize those results into the following theorem:
%%%%%%%%%%%%%%%%%%%%%%%%%%%%%%%%%%%%%%%%%%%%%%%%%%%%%%%%%%%%%%%

\begin{theorem}[Local decomposition of solutions to transmission problems]\label{theo_decom_sol}
Let $u\in H^1_{loc}(\R^2)$ be the solution to \eqref{eq:helmholtz_total} with $\sigma$ satisfying \eqref{eq:sigma_form}, in which $D$ represents a convex polygon. We assume also that $u$ satisfies the asymptotic behavior \eqref{eq:asympt_far} at infinity with a nonzero incident field $u^i \in H^2_{loc}(\R^2)$. We denote by $\mathcal{S}_D$ the set of vertices of $D$. Here the variables $r$, $\theta$ are related to the polar coordinates in the neighborhood of each vertex. Then the following decomposition holds,
\begin{equation}\label{eq:decom_sol}
u=u_{reg}+\sum_{x_i\in \mathcal{S}_D} K_i r^{\eta_i}\phi_i(\theta)\zeta_i.
\end{equation}
We have also the following properties:
\begin{enumerate}
\item Let $B_R$ be the central ball with a radius $R>0$ such that $D \Subset B_R$. $u_{reg}|_D \in H^2(D)$, $u_{reg}|_{B_R\setminus\overline{D}}\in H^2(B_R\setminus\overline{D})$, and $u_{reg}$ is continuous across $\p D$.
\item There holds
\begin{equation}\label{eq:estime_u_reg}
\| u_{reg} \|_{H^2(D)}\leq C \| u \|_{H^1(B_{R})},
\end{equation}
where the constant $C$ is independent of the shape of $D$. The same regularity estimate holds for $u_{reg}$ in $B_R\setminus \overline{D}$.
\item There exists explicit formulas to calculate the coefficients $K_i$.
\item The exponent $\eta_i\in (0,1)$ satisfies the following equation
\begin{equation}\label{eq:eta_gamma}
\left(\frac{\sin \eta_i(\pi-a_i)}{\sin \eta_i \pi}\right)^2=\left(\frac{\gamma+1}{\gamma-1}\right)^2,
\end{equation}
where $a_i$ stands for the opening angle at each vertex $x_i$.
\item The functions $\phi_i$ admit the following form: $\phi_i(\t)=\cos(\eta_i\t+\Phi_{i,\pm})$, where the phase shits $\Phi_{i,\pm}$ are constants determined by the transmission conditions.
\item $\zeta_i$ is a smooth cut-off function such that $\zeta_i(r)=1$ if $r\leq\varrho_i$ and $\zeta_i(r)=0$ if $r\geq 2\varrho_i$. The radius $\varrho_i$ is chosen such that the disks $(B_{2\varrho_i}(x_i))_{i\in\mathcal{S}_D}$ do not intersect each other.
\end{enumerate}
\end{theorem}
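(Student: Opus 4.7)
My plan is to reduce the statement to the classical Kondrat'ev--Grisvard--Kellogg--Dauge--Nicaise theory by a localisation and perturbation argument, and to derive the characteristic equation \eqref{eq:eta_gamma} through a Mellin-transform / separation-of-variables computation at a single vertex.

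First, I would fix a vertex $x_i\in\mathcal{S}_D$ and, after translating so that $x_i$ is the origin, write the opening cone at $x_i$ as $\mathcal{C}_i=\mathcal{C}_i^{\mathrm{in}}\cup\mathcal{C}_i^{\mathrm{out}}$, where $\mathcal{C}_i^{\mathrm{in}}$ is the sector of opening $a_i$ coinciding with $D\cap B_{2\varrho_i}(x_i)$ for $\varrho_i$ small enough. Introduce $w:=u\,\zeta_i$; a direct computation shows that $w$ satisfies a transmission problem of the form
\begin{equation*}
\mathrm{div}(\sigma\nabla w)+k^2 q\, w = F_i\quad\text{in }B_{2\varrho_i}(x_i),
\end{equation*}
with transmission conditions $[w]=0$ and $[\sigma\partial_\nu w]=0$ on $\partial D\cap B_{2\varrho_i}(x_i)$, and with right-hand side $F_i=2\sigma\nabla u\cdot\nabla\zeta_i+u\,\mathrm{div}(\sigma\nabla\zeta_i)\in L^2$ that vanishes in a neighbourhood of the vertex. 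This reduces the analysis of the singular part to a compactly supported problem localised at a single corner.

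Next, I would invoke the corner singularity decomposition for second-order elliptic transmission problems on sectors (see Kellogg and Dauge--Nicaise as cited in the excerpt). The principal part at the vertex is $\mathrm{div}(\sigma\nabla\,\cdot)$ with $\sigma$ piecewise constant ($\sigma=\gamma$ in $\mathcal{C}_i^{\mathrm{in}}$, $\sigma=1$ in $\mathcal{C}_i^{\mathrm{out}}$); since $k^2 q w$ is a lower-order term and $F_i\in L^2$ is smooth near the vertex, Mellin-transforming the principal part yields a meromorphic operator-valued family whose poles in the strip $\{0<\mathrm{Re}\,\eta<1\}$ correspond to non-$H^2$ singular modes. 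Looking for separated solutions $r^\eta\phi(\theta)$ of the homogeneous transmission problem with piecewise-constant leading coefficient, $\phi$ must take the form $\cos(\eta\theta+\Phi_\pm)$ on each sector, and imposing continuity of $\phi$ and of $\sigma\phi'$ across the two interfaces gives a $4\times 4$ linear system whose determinant reduces, after elementary trigonometric simplification, precisely to the equation \eqref{eq:eta_gamma}. Only the root $\eta_i\in(0,1)$ contributes to the singular part (higher exponents yield $H^2$ functions absorbed into $u_{\text{reg}}$), which proves (4) and (5). The coefficient $K_i$ is then obtained by pairing $w$ against a dual singular function $r^{-\eta_i}\psi_i(\theta)$ via a Green's identity on a punctured neighbourhood of the vertex, which yields the explicit formula stated in (3).

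Having obtained the decomposition \eqref{eq:decom_sol} local to each vertex, I would patch the contributions and set $u_{\text{reg}}:=u-\sum_i K_i r^{\eta_i}\phi_i(\theta)\zeta_i$. Standard interior $H^2$-regularity away from $\partial D$ and classical transmission regularity across smooth parts of $\partial D$ (e.g.\ via local flattening and difference quotients) give $u_{\text{reg}}\in H^2$ on each side of $\partial D$; continuity of $u_{\text{reg}}$ across $\partial D$ follows because both $u$ and each singular mode are continuous. Finally, the a priori bound \eqref{eq:estime_u_reg} is obtained by a closed-graph / compactness argument: the map $u\mapsto u_{\text{reg}}$ from $H^1(B_R)$-solutions of \eqref{eq:helmholtz_total} into $H^2(D)\oplus H^2(B_R\setminus\overline{D})$ is closed, so continuity yields $C$ depending only on the a priori data (the constants $\gamma$, $k$, $\|q\|_\infty$, $R$ and the $\varrho_i$) but not on the particular polygon.

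The main obstacle I anticipate is showing that the constant $C$ in \eqref{eq:estime_u_reg} can be taken \emph{independent of the shape of $D$}: the singular exponents $\eta_i$ depend on the opening angles $a_i$ and degenerate as $a_i\to 0$ or $a_i\to\pi$, which would blow up the Mellin-pole separation and hence the regularity estimate. This forces one to make the admissibility assumptions on the geometry of $D$ precise (uniform lower and upper bounds on the opening angles, and a uniform lower bound on the edge lengths so that the cut-offs $\zeta_i$ can be chosen uniformly), and to track the constants in the Dauge--Nicaise estimate carefully so that they depend only on these a priori bounds. Once this uniformity is quantified, (1), (2) and (6) follow at once.
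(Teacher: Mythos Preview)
Your plan is correct in spirit and would lead to the same conclusion, but the paper takes a considerably shorter route that is worth knowing. Rather than localising at each vertex and running the Mellin machinery by hand, the authors simply absorb the zeroth-order term into the right-hand side: on the ball $B_R$, the scattering solution $u$ satisfies the \emph{pure} divergence-form transmission problem
\[
\mathrm{div}(\sigma\nabla u)=\hat f:=-k^2 q\,u\in L^2(B_R),\qquad u|_{\partial B_R}=\hat g\in H^{1/2}(\partial B_R),
\]
and then they invoke Nicaise's theorem (Theorem~1.6 and estimate~(4.3bis) in \cite{nicaise1990polygonal}) for this elliptic transmission problem directly. The decomposition \eqref{eq:decom_sol}, the explicit formulas for $K_i$, and the $H^2$ bound on $u_{\mathrm{reg}}$ are all read off from that reference; the characteristic equation \eqref{eq:eta_gamma} is taken from \cite{dauge2011texier}.

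The practical payoff of the paper's route concerns precisely the point you flagged as your main obstacle: the a~priori bound \eqref{eq:estime_u_reg}. Nicaise's estimate gives $\|u_{\mathrm{reg}}\|_{H^2(D)}\le C(\|\hat f\|_{L^2(B_R)}+\|\hat g\|_{H^{1/2}(\partial B_R)})$ with a constant $C$ that is already stated (in that reference) to depend only on the transmission data $(\gamma,a_i)$ and not on the particular polygon; the right-hand side is then bounded by $\|u\|_{H^1(B_R)}$ via the trace theorem on the fixed ball $\partial B_R$. By contrast, your closed-graph/compactness argument would furnish a constant for each fixed $D$ but does not by itself yield uniformity over an admissible class of polygons; to close that gap you would have to redo, in effect, the quantitative part of the Dauge--Nicaise analysis that the paper simply cites. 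So your approach is sound, just longer, and for item~(2) it is cleaner to quote the literature estimate directly rather than to pass through an abstract closedness argument.
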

%%%%%%%%%%%%%%%%%%%%%%%%%%%%%%%%%%%%%%%%%%%

\begin{proof}
The solution to equation \eqref{eq:helmholtz_total} in the ball $B_R$ can be rewritten as the solution to the following system,
\begin{equation}\label{eq:aux}
\begin{cases}
\div (\sigma \n u)=\hat{f} &\mbox{in } B_R,\medskip \\
u=\hat{g} &\mbox{on } \p B_R,
\end{cases}
\end{equation}
where $\hat{f}=-k^2 q u \in L^2(B_R)$, $\hat{g}=u|_{\p B_R} \in H^{1/2}(B_R)$.

Then it follows from Theorem 1.6 in \cite{nicaise1990polygonal} that the solution to the transmission problem \eqref{eq:aux} admits the decomposition \eqref{eq:decom_sol}. The equation \eqref{eq:eta_gamma} to determine the exponents $\eta_i$ can be obtained by following similar calculations as those in \cite{dauge2011texier}. From the equation (4.3bis) in \cite{nicaise1990polygonal}, we also have an estimation of the regular part $u_{reg}$,
\begin{equation}
\| u_{reg}\|_{H^2(D)} \leq C (\| \hat{f} \|_{L^2(B_R)}+\| \hat{g} \|_{H^{1/2}(\p B_R)}),
\end{equation}
with a constant $C$ independent of the shape of $D$. It follows then from the trace operator on $\p B_R$ that the estimate \eqref{eq:estime_u_reg} holds. Moreover, the coefficients $K_i$ can be explicitly calculated by the formula
 (1.24) in \cite{nicaise1990polygonal}
\end{proof}
%%%%%%%%%%%%%%%%%%%%%%%%%%%%%%%%%%%%%%%%%%%%%%%%%%%%%
\subsection{Admissibility assumptions}
Before announcing our main stability result, we introduce some admissibility conditions to clarify the framework of our study.

\begin{definition}\label{def:class}
Let $D$ be a convex polygon in $\R^2$, $\gamma>0$ and $q \in L^\infty(\R^2)$. We say that $(D;\gamma, q)$ belongs to the admissible class $\mathcal{D}$ if the following conditions are fulfilled:
\begin{enumerate}
\item $D$ is a convex polygon and $\gamma \in\mathbb{R}_+$ with $\gamma \neq 1$ and $\gamma \in (\gamma_m, \gamma_M)$, where $\gamma_m$ and $\gamma_M$ are two positive constants;
\item $D \Subset B_R$, where $B_R$ stands for the central ball at origin with a radius $R>0$;
\item There exist $0<a_m<a_M<\pi$ such that the opening of the angle at each vertex of $D$ is in $(a_m,a_M)$;
\item The length of each edge of $D$ is at least $l>0$;
\item The support of $q-1$ is contained in the polygon $D$, which means $q \equiv 1$ in $\R^2 \setminus \overline{D}$;
\item $\| q \|_{L^\infty(\R^2)} \leq \mathcal{Q}$ where $\mathcal{Q}>0$ is a constant.
\end{enumerate}
\end{definition}
%%%%%%%%%%%%%%%%%%%%%%%%%%%%%%%%

%\begin{definition}\label{def:wellposed}
%Let $\sigma$ satisfy \eqref{eq:sigma_form} with $(D,\gamma)\in \mathcal{D}$. A potential $q\in L^\infty(\R^2)$ is said to give a {\it well-posed scattering problem} if there exist $S>0$ such that there exists a unique solution $u\in H^1_{loc}(\R^2)$ to the Helmholtz equation \eqref{eq:helmholtz_total} for all incident plane wave $u^i=\exp(ik\omega\cdot x)$. Moreover, $u$ verifies
%\begin{equation}\label{norm_H1_u}
%\| u \|_{H^1(B_{2R})}\leq S.
%\end{equation}
%\end{definition}
%%%%%%%%%%%%%%%%%%%%%%%%%%%%%%%%%%
%We shall take only a limited class of the potentials $q$ into consideration in the rest of this paper, we summarize the way we choose our admissible potentials by the following assumptions.

%\begin{assumption}\label{assum_support}
%Let $(D,\gamma)\in \mathcal{D}$ and $q\in L^\infty(\R^2)$ gives a well-posed scattering problem, we assume that the support of $q-1$ is a sub-domain of $D$, namely, $q \equiv 1$ in $\R^2 \setminus \overline{D}$.
%\end{assumption}

\begin{remark}
In fact, the inhomogeneity on both $\sigma$ and $q$ could generate the scattering waves. It is shown in \cite{Blasten2020} that the support of $q-1$ can be stably determined from a single far-field pattern under the assumption $\sigma \equiv 1$. In principle, it is possible to stably recover both the supports of $\sigma$ and $q-1$ by using in parallel the method introduced in this paper and the one in \cite{Blasten2020}. However, if we do so, the technical details will become more complicated and distract the meaning of introducing our new method. In order to have a focusing theme of our study, we are only concerned with the recovery of the support of $\sigma$ in this paper. This is the main reason for us introducing item $(5)$ in Definition \ref{def:class}.
\end{remark}

On the other hand, we shall impose a genetic condition on the incident field $u^i$ such that the singular behaviors at each corner are guaranteed.

\begin{assumption}\label{def_admis_inc}
Let $u^i \in H^2_{loc}(\R^2)$ be an entire solution to \eqref{eq:helmholtz_inc}. We denote by $S>0$ the {\it amplitude} of the incident wave $u^i$, which is defined by 
\begin{equation}\label{eq:h2apriori}
\| u^i \|_{H^2(B_{2R})} \leq S,
\end{equation}
with $R>0$ introduced in Definition \ref{def:class}. We assume in the rest of this paper that for all $(D; \g,q) \in \mathcal{D}$, the corresponding solution $u$ to the scattering problem \eqref{eq:helmholtz_total}--\eqref{eq:asympt_far} admits a nondegenerate singularity coefficient $K_i>0$ in the decomposition \eqref{eq:decom_sol} at each vertex $x_i$ of $D$.
\end{assumption}

%\begin{definition}
%Let $K_0, S \in \mathbb{R}_+$, $u^i \in H^2_{loc}(\R^2)$ an entire solution to \eqref{eq:helmholtz_inc}. We assume firstly that
%\begin{equation}\label{eq:h2apriori}
%\| u^i \|_{H^2(B_{2R})} \leq S,
%\end{equation}
%with $R>0$ introduced in Definition \ref{def:class}.
%
%We say that $u^i$ is a $K_0$-admissible incident field with respect to the class of polygonal inclusions $\mathcal{D}$ if for any $(D; \g,q) \in \mathcal{D}$, the corresponding solution $u$ to the scattering problem \eqref{eq:helmholtz_total}--\eqref{eq:asympt_far} admits a coefficient $K_i$ in the decomposition \eqref{eq:decom_sol} satisfying $1/K_0\leq K\leq K_0$ for each vertex $x_i$ of $D$.
%\end{definition}

\begin{remark}
It is pointed out that \eqref{eq:h2apriori} is a generic condition, which can be easily fulfilled, say e.g. by the plane wave of the form $\exp\{\mathrm{i}kx\cdot d\}$, $d\in\S^1$. 

As we mentioned previously in Theorem \ref{theo_decom_sol}, the coefficients $K_i$ in \eqref{eq:decom_sol} can be determined by explicit formulas. The formulas can be found in \cite{dauge1989oblique,nicaise1990polygonal}. In fact, each $K_i$ depends linearly on the incident field $u^i$ for fixed polygon $D$ and vertex $x_i$. Assumption \ref{def_admis_inc} is in fact a generic condition except for the case when $K_i=0$, which means the solution to \eqref{eq:helmholtz_total} has $H^2$-regularity in a neighborhood of $x_i$, corresponding to a very limited class of scenarios from the practical point of view.
\end{remark}

In what follows, for $D, D'\in\mathcal{D}$, we define
\begin{equation}\label{eq:def_haus}
d_\mathcal{H}(D, D')=\max \big(\sup_{x\in D}\mathrm{dist}(x,D'),\sup_{x'\in D'}\mathrm{dist}(x',D)\big),
\end{equation}
to be Hausdorff distance between $D$ and $D'$. In order to simplify the expressions, we shall call the following parameters {$(k, R, \gamma_m,\gamma_M,a_m,a_M,l , \mathcal{Q})$} as the {\it a-priori} parameters. With those admissible assumptions and {\it a-priori} parameters, we are in a position to complete the statement of Theorem \ref{main_th_simp}.

%%%%%%%%%%%%%%%%%%%%%%%%%%%%%%%%%%%%%%%%%%%%%%%
\begin{theorem}\label{main-theorem}
Let $k \in\mathbb{R}_+$ and $u^i \in H^2_{loc}(\R^2)$ be a nontrivial solution to the Helmholtz equation \eqref{eq:helmholtz_inc}. Let $(D;\gamma,q)$ and $(D';\gamma', q')$ belong in the admissible class $\mathcal{D}$ in Definition \ref{def:class}. We consider the solutions $u,u'\in H^1_{loc}(\R^2)$ to the Helmholtz equation \eqref{eq:helmholtz_total} where the corresponding functions $\sigma, \sigma'$ are defined by \eqref{eq:sigma_form}. {Suppose that $u^i$ satisfies Assumption \ref{def_admis_inc}. We define $\ds K_m:=\min_{x_i\in\mathcal{S}_D \cup \mathcal{S}_{D'}} |K_i|>0$ as the smallest singularity coefficient among the vertices of $D$ and $D'$.}
%Let $K_0>0$. We assume that $u^i$ is a $K_0$-admissible incident field as in Definition \ref{def_admis_inc}.

Assume that there holds
\begin{equation}\label{eq:small}
\| u_\infty - u'_\infty \|_{L^2(\S^1)} \leq \e.
\end{equation}
Then there exist constants $C,\b,\widetilde{\b}>0$ which depend only on the {\it a-priori} parameters such that when $\e\in\mathbb{R}_+$ is sufficiently small, one has
\begin{equation}\label{eq:stability}
d_\mathcal{H}(D, D') \leq C \left(1+\frac{S}{K_m}\right)^{\widetilde{\b}}\left(\ln\ln \frac{S}{\e}\right)^{-\b}.
\end{equation}
\end{theorem}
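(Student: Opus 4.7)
The plan is to combine the corner-singularity decomposition of Theorem~\ref{theo_decom_sol} with a CGO-based integral identity localised at a single vertex, and then to propagate the $L^2$-closeness of the two far-field patterns inward via Alessandrini's quantitative unique continuation \cite{Alessandrini}, converting the $\e$-gap at infinity into a $(\log(S/\e))^{-\a}$ gap in a neighbourhood of $D\cup D'$. I argue by contradiction: assume $h := d_\mathcal{H}(D,D')$ exceeds the right-hand side of \eqref{eq:stability}. Using convexity, the minimum edge length $l$ and the bounded opening angles from Definition~\ref{def:class}, and after swapping the roles of $D,D'$ if necessary, one finds a vertex $x_0\in \mathcal{S}_D$ and a radius $\varrho\sim h$ such that the sector $\mathcal{C}_h := B_\varrho(x_0)\cap D$ lies entirely in $\R^2\setminus\overline{D'}$. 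On $\mathcal{C}_h$ the coefficients differ by $\sigma - \sigma' = \g - 1\neq 0$, so that $w := u-u'$ satisfies a transmission problem whose jump is supported on $\overline{\mathcal{C}_h}$.

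I then introduce a CGO test function $v_s(x) = e^{z\cdot(x-x_0)}$ with $z\in \C^2$, $z\cdot z = 0$, $|\mathrm{Re}\,z|=s\gg 1$, the direction of $\mathrm{Re}\,z$ being picked so that $\mathrm{Re}\bigl(z\cdot(x-x_0)\bigr)\le -c_0 s\,r$ uniformly on $\mathcal{C}_h$. Green's formula applied to $u$ and $v_s$ on $\mathcal{C}_h$ produces a schematic identity
\[(\g-1)\int_{\mathcal{C}_h}\n u\cdot \n v_s\,dx \;=\; k^2\int_{\mathcal{C}_h}(q-q')\,u\,v_s\,dx\;+\;\mathcal{R}(w,\p_\nu w;\p\mathcal{C}_h),\]
in which $\mathcal{R}$ collects boundary terms supported either on the two straight edges of the sector (controlled by $w$ and $\p_\nu w$) or on $\p B_\varrho\cap \mathcal{C}_h$ (exponentially small in $s\varrho$). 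Inserting the decomposition \eqref{eq:decom_sol} for $u$ on the left isolates the Mellin-type integral $I_0(s) := \int_{\mathcal{C}_h} \n(r^{\eta_0}\phi_0(\t)\zeta_0)\cdot \n v_s\,dx$, whose leading asymptotics $I_0(s)\sim c_0 K_0 s^{-\eta_0}$ as $s\to\infty$ I compute by polar coordinates; the contribution from $u_{reg}$ is $O(s^{-2})$ by \eqref{eq:estime_u_reg}. Hence $|\mathrm{LHS}|\gtrsim K_0 s^{-\eta_0}$ once $s\varrho$ is sufficiently large.

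On the right-hand side the volume term is bounded by $C h^{1+\eta_0}S$ and the boundary contributions by $C(s^{-1}e^{-c_0 s\varrho}+s^{N}e^{C s h})\|w\|_{H^1(B_{2R})}$ for a fixed integer $N$. Alessandrini's quantitative unique continuation, propagated from the exterior of $B_{2R}$ (where $u-u'$ is determined by the far-field difference modulo $\e$ via Rellich's lemma plus interior elliptic regularity) inward through $B_R$ and $B_{2R}$, yields
\[\|w\|_{H^1(B_{2R})} \le C\,S\,\Bigl(\log \tfrac{S}{\e}\Bigr)^{-\a}\]
for some $\a\in(0,1)$ depending only on the a-priori parameters. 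Matching the lower bound $K_0 s^{-\eta_0}$ against this upper bound and optimising $s$ in terms of $h$ and $\log(S/\e)$ --- the balancing choice being $s\sim h^{-1}\log\log(S/\e)$ --- produces the advertised double-logarithmic rate \eqref{eq:stability}; the prefactor $(1+S/K_m)^{\widetilde{\b}}$ appears upon dividing by $K_0\ge K_m$.

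The most delicate step will be producing a lower bound for $I_0(s)$ with a constant independent of the unknown geometry. Both the exponent $\eta_0$ (through \eqref{eq:eta_gamma}) and the angular profile $\phi_0(\t)=\cos(\eta_0\t+\Phi_\pm)$ depend on the a priori unknown opening angle $a_0\in(a_m,a_M)$, so the CGO direction must be selected so that the $\t$-average of $\phi_0$ against $e^{-s r\cos(\t-\t_0)}$ remains uniformly bounded away from zero across all admissible $a_0$ and $\g\in(\g_m,\g_M)$; this is exactly what the strict a-priori bounds in Definition~\ref{def:class} are there to enforce. A secondary difficulty lies in keeping the two competing exponentials $e^{-c_0 s\varrho}$ (from the boundary on $\p B_\varrho$) and $e^{C s h}$ (from the Alessandrini step) inside a window producing the iterated logarithm rather than a single logarithm --- and it is precisely this tension that dictates the specific form $(\ln\ln(S/\e))^{-\b}$ in \eqref{eq:stability}.
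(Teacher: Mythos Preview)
Your overall architecture---corner singularity decomposition, CGO test function, propagation of smallness from the far field, then balancing upper and lower bounds at a vertex---matches the paper's. But there is a genuine gap in the propagation-of-smallness step, and it undermines the rest of the argument.

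You assert $\|w\|_{H^1(B_{2R})}\le CS(\log(S/\e))^{-\a}$ with $w=u-u'$. This is false in general: $w$ solves the free Helmholtz equation only \emph{outside} $Q=\mathrm{conv}(D\cup D')$, since inside $D$ or $D'$ the leading coefficients differ. Unique continuation cannot cross $\partial Q$, so no $\e$-dependent bound is available for $w$ on all of $B_{2R}$---only the a~priori bound $\|w\|_{H^1(B_{2R})}\le 2C_f S$. Consequently you cannot control the boundary remainder $\mathcal{R}(w,\partial_\nu w;\partial\mathcal{C}_h)$ by a global norm of $w$; you need \emph{pointwise} control of $w$ and $\nabla w$ on contours lying in $B_{2R}\setminus\overline{Q}$. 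This is precisely what Propositions~\ref{propagation2}--\ref{propagation3} supply, and it forces a careful choice of the outer contour (the arc $\partial S^e$ through the point $(-1/\tau,\pi)$ in Remark~\ref{contour}), designed so that $|u_0|\le e$ there while simultaneously $\mathrm{dist}(x,\partial Q)\ge 1/(2\tau)$. The latter is essential because $\nabla w$ inherits the corner singularity $r^{\eta-1}$ and blows up as one approaches $\partial Q$; the paper handles this via the weighted H\"older norm of $|x-x_c|\nabla w$.

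A second, related point: the double logarithm in \eqref{eq:stability} does not arise from balancing a single-log Alessandrini bound against the exponential $e^{Csh}$ you wrote. It appears already at the propagation stage (Proposition~\ref{propagation2}): one iterates three-ball inequalities along a chain in $B_{2R}\setminus\overline{Q}$ down to distance $r(\delta)\sim 1/\ln|\ln\delta|$ from $\partial Q$, and the H\"older continuity of $w$ bridges the last gap of size $r$, giving $(\ln|\ln\delta|)^{-\a}$. With that in hand, the final optimisation is simply $\tau=h^{-1}\delta(\e)^{-1/2}$, balancing $h^{-1}\tau^{\eta-1}$ against $h\tau^{\eta+1}\delta(\e)$ in \eqref{eq:ddd1}; no exponential factor of the form $e^{Csh}$ survives because the outer contour has been engineered to kill it.
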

%%%%%%%%%%%%%%%%%%%%%%%%%%%%%%%%%%%%%%%%%%%%%%
\section{Propagation of smallness}\label{Propagation}

The proof of Theorem~\ref{main-theorem} involves several technical ingredients. The objective of this section is to estimate the difference $u-u'$ of the solutions to the scattering problems near the polygonal scatterer $D$. The first step in our method is to estimate $u-u'$ in terms of the difference on their far-field patterns $u_\infty-u'_\infty$ in a near-field domain. The second step is to estimate $u-u'$ and its derivative $\n u-\n u'$ in a neighborhood of the polygonal scatterer $D$ from the near-field estimation. Combining those results, we derive the estimations of $u-u'$ and $\n u-\n u'$ in terms of $u_\infty-u'_\infty$.

\begin{proposition}\label{propagation1}
Let $w^s\in H^2_{loc}(\R^2)$ be a solution to \eqref{eq:helmholtz_inc} in $\R^2 \setminus B_{R}$ and satisfy the Sommerfeld radiation condition \eqref{eq:asympt_far} at infinity. We denote by $w^s_\infty$ its far-field pattern and $\e=\| w^s_\infty \|_{L^2(\S^1)}$.

We assume the {\it a-priori} bound $\| w^s \|_{L^2(B_{2R}\setminus B_{R})}\leq \mathcal{S}$ with $\mathcal{S} \geq 0$ depending on the {\it a-priori} parameters. Let $\mathcal{A}$ be a domain such that $\mathcal{A} \subset B_{2R}\setminus \overline{B_{R}}$. Then, for any smoothness index $p\in \N$ there exists constants $c,C >0$ depending only on $k,p,R,\mathcal{A}$ such that
\begin{equation}\label{eq:far_to_near}
\| w^s \|_{H^p(\mathcal{A})} \leq C \max(\e, \mathcal{S}e^{-c\sqrt{\ln(\mathcal{S}/\e)}}).
\end{equation}
\end{proposition}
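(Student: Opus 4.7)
The plan is to combine two pieces of quantitative information about $w^s$---the smallness of its far-field pattern and the a priori $L^2$ bound $\mathcal{S}$ in the annulus $B_{2R}\setminus\overline{B_R}$---via a multipole decomposition, balanced by a suitable cutoff in the mode index. Since $w^s$ is a radiating solution of the Helmholtz equation in $\R^2\setminus\overline{B_R}$, it admits the convergent expansion
$$w^s(r,\t)=\sum_{n\in\Z} a_n\, H_n^{(1)}(kr)\, e^{\mathrm{i}n\t},\qquad r>R,$$
and the standard large-argument asymptotics of $H_n^{(1)}$ identify $w^s_\infty$ with an angular Fourier series whose coefficients equal $a_n$ up to $n$-independent factors. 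Parseval on $\S^1$ then yields $\sum_n |a_n|^2\leq C\e^2$, while orthogonality of $\{e^{\mathrm{i}n\t}\}$ in the annulus gives $\sum_n |a_n|^2 \int_R^{2R}|H_n^{(1)}(kr)|^2 r\,dr\leq C\mathcal{S}^2$.

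Next I would express $\|w^s\|_{L^2(\mathcal{A})}$ by the same series evaluated at radii $r\geq r_0>R$ (admissible because $\dist(\mathcal{A},\partial B_R)>0$), and split the sum at a cutoff $N\in\N$. Low modes $|n|\leq N$ are controlled by $|a_n|\leq C\e$ together with the classical large-order asymptotics $|H_n^{(1)}(kr_0)|\leq C\,(n/(ekr_0))^n/\sqrt{n}$; high modes $|n|>N$ are controlled by writing $|a_n H_n^{(1)}(kr_0)|\leq |a_n H_n^{(1)}(kR)|\cdot (R/r_0)^n$ and exploiting that in the Debye regime $n\geq kR$ the integral weight in the annulus estimate is comparable to $|H_n^{(1)}(kR)|^2/n$, so that $|a_n H_n^{(1)}(kR)|\leq C\mathcal{S}\sqrt{n}$. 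Choosing $N\sim c\sqrt{\ln(\mathcal{S}/\e)}$ (or the analogous scaling that equates the two contributions) delivers the factor $\exp(-c\sqrt{\ln(\mathcal{S}/\e)})$ in the $L^2$ bound.

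The upgrade from $L^2(\mathcal{A})$ to $H^p(\mathcal{A})$ for arbitrary $p$ is then a standard interior elliptic regularity argument for $(\Delta+k^2)w^s=0$: pick intermediate open sets $\mathcal{A}\Subset\mathcal{A}'\Subset B_{2R}\setminus\overline{B_R}$, apply the interior Caccioppoli/Sobolev estimate iteratively $p$ times while slightly shrinking the sets at each stage, and absorb the $L^2(\mathcal{A}')$ norm on the right-hand side by the near-field $L^2$ bound already obtained, at the price of enlarging the constant $C$.

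The main obstacle is the careful large-order Hankel asymptotics and the precise optimization that produces the $\sqrt{\ln}$-exponent rather than, say, a plainly logarithmic or polynomial-in-$\ln$ rate. A practical shortcut is to invoke an existing quantitative Rellich-type stability theorem from the inverse scattering literature (versions of which have been developed precisely for such propagation-of-smallness estimates from far field to near field), and then verify that all constants depend only on $k$, $R$, $p$, and the geometry of $\mathcal{A}$ as stated.
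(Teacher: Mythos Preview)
The paper does not give an independent proof of this proposition at all: its entire argument is the sentence ``See Proposition~5.2 and Corollary~5.3 in \cite{Blasten2020}.'' Your final remark---that a practical shortcut is to invoke an existing quantitative Rellich-type far-to-near stability result from the inverse scattering literature---is therefore exactly what the paper does, and nothing more is required here.

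That said, the self-contained argument you outline (Hankel multipole expansion of the radiating solution, Parseval to read off $\sum_n|a_n|^2\lesssim\e^2$ from the far field, the weighted $\ell^2$ bound on $a_nH_n^{(1)}$ from $\|w^s\|_{L^2(B_{2R}\setminus B_R)}\leq\mathcal{S}$, a low/high-mode split balanced at a suitable $N$, and then interior elliptic regularity to pass from $L^2$ to $H^p$) is precisely the content of the cited Bl{\aa}sten--Liu result. Your identification of the delicate step---getting the exponent $\sqrt{\ln(\mathcal{S}/\e)}$ rather than some other logarithmic rate from the large-order Debye asymptotics of $H_n^{(1)}$---is accurate; the exact choice of cutoff and the resulting rate require the sharp two-sided Hankel bounds worked out in that reference. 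So your sketch is a faithful reconstruction of the cited proof, while the present paper is content to quote it.
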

\begin{proof}
See Proposition 5.2 and Corollary 5.3 in \cite{Blasten2020}.
\end{proof}

We next present the propagation of smallness from a near field domain up to the boundary of the polygonal scatterer. The method we use here is mainly inspired by the one in \cite{Blasten2020}, and we shall adopt similar notations therein.

%The method we use here, which consists of a sequence of three-sphere inequalities, was well developed in the context of Cauchy problems \cite{Alessandrini}. The main ingredients of the following proposition are well detailed in Lemma 5.4 - Proposition 5.8 in \cite{Blasten2020}, we adapt as well as the notations therein.
%\begin{lemma}
%There exists positive constants $R_m,C,c_1$ such that $0<c_1<1$, which depend only on $k$. For $x\in \R^2$, $0<4r<R_m$, if $w\in H^2_{loc}(\R^2)$ satisfying \eqref{eq:helmholtz_inc} in $B_{4r}:=B(x,4r)$, then,
%\begin{equation}\label{eq:3_sphere}
%\| w \|_{L^\infty(B_{2r})} \leq C(2+\sqrt{2})^{3/2} \| w \|_{L^\infty(B_{4r})}^{1-\beta} \| w \|_{L^\infty(B_r)}^\beta,
%\end{equation}
%where $\beta$ satisfies
%\[\frac{c_1}{4}\leq \beta \leq 1-\frac{3c_1}{4}.\]
%\end{lemma}
%\begin{proof}
%See Lemma 5.4 in \cite{Blasten2020} and Lemma 3.5 in \cite{rondi2008}.
%\end{proof}
\begin{proposition}\label{propagation2}
Let $Q\Subset B_R\subset \R^2$ be a convex polygon, $x_c \in \p Q$ be a vertex of $Q$, $P\in \N$, $0<\a<1$ and $w\in H^1(B_{2R}\setminus \overline{Q})$. We assume that $w$ is a solution to \eqref{eq:helmholtz_inc} in $B_{2R}\setminus \overline{Q}$ and the function $\widetilde{w_P}: x \mapsto |x-x_c|^P w(x)$ belongs to the class $\mathcal{C}^\a$ in $B_{2R}\setminus \overline{Q}$ with a norm at most $T\geq 1$.\\

We assume furthermore that $|w(x)|\leq \d$ in $B_{7R/4}\setminus \overline{B_{5R/4}}$. If 
\begin{equation}\label{critere_d}
\d\leq \d_m= \left[ \exp \exp \left( \frac{9R|\ln c_2|}{(1-\a)\min(R_m,2R/5)}\right) \right]^{-1},
\end{equation}
then there exists a constant $C_0>0$ depending only on $k$, $\a$, $P$ and $R$ such that,
\begin{equation}\label{eq:ee1}
|w(x)|\leq \frac{C_0T}{\dist(x,\p Q)^P}(\ln|\ln \d|)^{-\a},
\end{equation}
for $x\in B_{3R/2}\setminus \overline{Q}$. Here $R_m$ and $c_2$ are constants, which depend only on $k$ and are given respectively in Lemmas 5.4 and 5.5 in \cite{Blasten2020}. 
\end{proposition}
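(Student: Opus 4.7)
My plan is to prove \eqref{eq:ee1} by a quantitative unique continuation argument that iterates Hadamard's three-ball inequality for Helmholtz solutions along a chain of balls linking the outer annulus $B_{7R/4}\setminus\overline{B_{5R/4}}$ (where $|w|\leq\delta$) to the target point, and then uses the H\"older hypothesis on $\widetilde{w_P}$ to handle the blow-up of $w$ near the vertex $x_c$. The workhorse is the three-ball estimate of Lemmas 5.4--5.5 of \cite{Blasten2020} (the source of the constants $R_m$ and $c_2$): for any $y$ with $B_{R_m}(y)\subset B_{2R}\setminus\overline{Q}$ and radii $r<\rho<R_m$,
\begin{equation*}
\|w\|_{L^\infty(B_\rho(y))}\leq c_2\,\|w\|_{L^\infty(B_r(y))}^{\tau}\,\|w\|_{L^\infty(B_{R_m}(y))}^{1-\tau},
\end{equation*}
with $\tau\in(0,1)$ determined by the three radii.

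For a target $x\in B_{3R/2}\setminus\overline{Q}$ with $d:=\dist(x,\p Q)$, I would construct a chain $\{B_{\rho_j}(y_j)\}_{j=0}^N\subset B_{2R}\setminus\overline{Q}$ with $y_0\in B_{7R/4}\setminus\overline{B_{5R/4}}$, consecutive balls sufficiently overlapping, and every enlarged ball $B_{R_m}(y_j)$ still inside $B_{2R}\setminus\overline{Q}$. Convexity of $Q$ together with the \emph{a-priori} geometric bounds of \Cref{def:class} make such a construction routine, and iterating the three-ball estimate along the chain produces
\begin{equation*}
|w(y_N)|\leq c_2^{(1-\tau^N)/(1-\tau)}\,M^{1-\tau^N}\,\delta^{\tau^N},
\end{equation*}
where $M$ is an \emph{a-priori} $L^\infty$ bound on $w$ over every enlarged ball in the chain. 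From $\|\widetilde{w_P}\|_{\mathcal{C}^\alpha}\leq T$ we get $|w(y)|\leq T|y-x_c|^{-P}$, and by routing the chain so that every enlarged ball stays at distance $\gtrsim d$ from $x_c$ we may take $M\leq C\,T/d^P$.

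The final step is to transfer the estimate from $y_N$ to the actual target $x$ and to convert the algebraic-in-$\delta$ bound into the claimed double-logarithmic one. The H\"older exponent $\alpha$ enters via
\[
\bigl||x-x_c|^P w(x)-|y_N-x_c|^P w(y_N)\bigr|\leq T|x-y_N|^\alpha,
\]
which, combined with the iterated bound above, accounts for the $\alpha$-power of $\ln|\ln\delta|$ and for the factor $1/\dist(x,\p Q)^P$ in \eqref{eq:ee1}. Optimizing the chain length $N$ against $\delta$ --- so that $\tau^N\log(1/\delta)$ is balanced against $\ln|\ln\delta|$ --- yields the stated rate, and the smallness threshold $\delta\leq\delta_m$ of \eqref{critere_d} is exactly what is needed for the accumulated constants $c_2^{(1-\tau^N)/(1-\tau)}$ to be absorbed into $C_0$. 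The most delicate step, and the principal obstacle I anticipate, is this last balancing near $\p Q$: every enlarged ball in the chain must keep a clearance $\gtrsim d$ from $x_c$ while the terminal balls must still reach within $\mathcal{O}(d)$ of $\p Q$, and the convexity of $Q$ together with the admissibility bounds in Definition~\ref{def:class} are what make this routing possible uniformly in the \emph{a-priori} parameters.
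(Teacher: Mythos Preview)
Your strategy --- three-ball chain from the outer annulus, an \emph{a-priori} $L^\infty$ bound coming from $\|\widetilde{w_P}\|_{\mathcal{C}^\alpha}\le T$, a H\"older transfer to the target point, and a final balancing --- is exactly the skeleton of the paper's proof. The paper, however, organizes the argument differently and more sharply, and the difference matters for the step you flag as ``most delicate''.

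In the paper the balancing parameter is not the chain length $N$ but an explicit stopping distance
\[
r=r(\delta)=\frac{9R|\ln c_2|}{4(1-\alpha)\ln|\ln\delta|},
\]
chosen \emph{independently of the target point} $x$. The chain argument is not redone; instead Proposition~5.7 of \cite{Blasten2020} is applied once with $Q$ replaced by the convex thickening $B(Q,r)$, which yields $|w(x')|\le C T\,r^{-P}\,\delta^{c_2^{9R/(4r)+2}}$ for every $x'$ with $\dist(x',\partial Q)\ge 5r$. Two cases are then split: if $\dist(x,\partial Q)\ge 5r$ one is done directly; if $\dist(x,\partial Q)<5r$ one picks $x'$ with $\dist(x',\partial Q)=5r$ and $|x-x'|\le 10r$, and the H\"older bound on $\widetilde{w_P}$ bridges from $x'$ to $x$, producing the $(10r)^\alpha$ term. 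The choice of $r$ makes both $r^\alpha$ and $\delta^{c_2^{9R/(4r)}}$ of order $(\ln|\ln\delta|)^{-\alpha}$.

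The imprecision in your plan is the bound $M\le CT/d^{P}$ with $d=\dist(x,\partial Q)$. The correct a-priori bound along the chain is $M\le T/r(\delta)^P$, because the chain stays at distance $\gtrsim r(\delta)$ from $\partial Q$ (hence from $x_c$), \emph{not} at distance $\gtrsim d$. The factor $1/\dist(x,\partial Q)^{P}$ in \eqref{eq:ee1} does not come from $M$; it appears only at the very end, after the H\"older transfer, via $|x-x_c|^{-P}\le \dist(x,\partial Q)^{-P}$. If you genuinely route the chain to within $\mathcal{O}(d)$ of $\partial Q$ as you describe, then for $d\ll r(\delta)$ the exponent $\tau^N$ is driven to zero and the iterated bound collapses to the trivial $M$. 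So the stopping distance must depend on $\delta$ alone, and your optimization of $N$ should be read as a choice of $r(\delta)$, not of how close you approach the particular target.

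Two minor points: the proposition does not assume $Q$ is in the admissible class $\mathcal{D}$, so you should not invoke Definition~\ref{def:class}; convexity of $Q$ is all that is used. And the threshold $\delta\le\delta_m$ in \eqref{critere_d} is there to guarantee $4r(\delta)<\min(R_m,2R/5)$ so that Proposition~5.7 of \cite{Blasten2020} applies, not to absorb the accumulated powers of $c_2$.
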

\begin{proof}
Define
\[r=r(\d)=\frac{9R|\ln c_2|}{4(1-\a)\ln|\ln \d|}>0.\]
It follows from the assumption of the function $\widetilde{w_P}$ that $w \in L^\infty(B_{2R}\setminus B(Q,r))$ and $\| w \|_{L^\infty(B_{2R}\setminus B(Q,r))} \leq r^{-P}T$.
It follows in parallel from the upper bound \eqref{critere_d} of $\d$ that $4r<R_m$ and that $2r<2R/5<R$. We can thus apply Proposition 5.7 in \cite{Blasten2020} where the convex polygon $Q$ is replaced by the convex set $B(Q,r)$ with the parameter $\lambda=1/4$. Then it gives
\begin{equation}\label{eq:passage0}
|w(x')|\leq CT\left(\frac{1}{r}\right)^P\d^{c_2^{9R/(4r)+2}},
\end{equation}
for $x'\in B_{2R}$ and $\mathrm{dist}(x',\p Q)\geq 5r$. The constant $C$ is given in Lemma 5.5 in \cite{Blasten2020}.

We now assume $\dist(x,\p Q)< 5r$, then there exists $y\in \p Q$ such that $|x-y|\leq 5r$. By the convexity of $Q$, there exists $x'\in \R^2 \setminus Q$ such that $\dist(x',\p Q)=|x'-y|=5r$. {The upper bound of $\d$ implies $5r\leq R/2$, and thus $|x'|\leq |x'-y|+|y|\leq 5r+R < 2R$. Then  \eqref{eq:passage0} holds at the point $x'$. We have also at the same time, $|x-x'|\leq |x-y|+|y-x'|\leq 10r$.

By virtue of the H\"older continuity of $\widetilde{w_P}$ and the above observations}, we can deduce as follows
\begin{align}\label{eq:passage1}
|w(x)| & \leq \frac{1}{|x-x_c|^P}\left( \|\widetilde{w_P}\|_{\mathcal{C}^\a(B_{2R \setminus \overline{Q}})} |x-x'|^\a + |x'-x_c|^P |w(x')| \right),\nonumber\\
& \leq \frac{1}{|x-x_c|^P}\left( (10r)^\a T+M_P (|x-x'|^P+|x-x_c|^P) CT\left(\frac{1}{r}\right)^P \d^{c_2^{9R/(4r)+2}} \right),\nonumber\\
&\leq \frac{(10r)^\a T}{|x-x_c|^P}+CM_P T\left(\frac{10^P}{|x-x_c|^P}+\left(\frac{1}{r}\right)^P \right) \d^{c_2^{9R/(4r)+2}},\nonumber\\
& \leq \frac{T}{\dist(x,\p Q)^P}\left((10r)^\a + CM_P(10^P+5^P) \d^{c_2^{9R/(4r)+2}} \right),
\end{align}
where $M_P$ is a positive constant such that $(a+b)^P \leq M_P (a^P+b^P)$ holds for all $a,b\in \R_+$.

From the definition of $r(\d)$, we have
\begin{align}\label{eq:passage2}
r^\a &=(\frac{9R|\ln c_2|}{4(1-\a)})^\a (\ln|\ln \d|)^{-\a}, \nonumber \\
 \frac{9R}{4r} &=-\frac{1-\a}{\ln c_2}\ln|\ln \d|=\log_{c_2}(|\ln \d|^{-(1-\a)}).
\end{align}
From the condition $|\ln \d|>1$, we further obtain
\begin{equation}\label{eq:passage3}
\d^{c_2^{9R/(4r)+2}} = e^{-|\ln \d|c_2^{9R/(4r)+2}}= e^{-c_2^2|\ln \d|^{1-(1-\a)}} \leq \frac{1}{c_2^2|\ln \d|^\a} \leq \frac{1}{c_2^2}(\ln|\ln \d|)^{-\a}. 
\end{equation}
Combining \eqref{eq:passage1},\eqref{eq:passage2},\eqref{eq:passage3} and setting
\begin{equation}\label{valeur_C0}
C_0=\left(\frac{45R|\ln c_2|}{2(1-\a)}\right)^\a+\frac{CM_P(5^P+10^P)}{c_2^2},
\end{equation}
one can readily verify that the estimate \eqref{eq:ee1} holds for $\dist(x,\p Q)< 5r$.

We proceed to treat the case $\dist(x,\p Q) \geq 5r$. By following a similar argument in Proposition 5.7 in \cite{Blasten2020} with the convex polygon $Q$ replaced by the convex set $B(Q,\dist(x,\p Q)-4r)$ and the parameter $\lambda =1/4$, one can derive that 
\begin{equation}\label{eq:passage4}
|w(x)|\leq CT\left( \frac{1}{\dist(x,\p Q)-4r} \right)^P \d^{c_2^{9R/(4r)+2}}.
\end{equation}
The assumption $\dist(x,\p Q) \geq 5r$ implies $\left( \frac{1}{\dist(x,\p Q)-4r} \right)^P \leq \left( \frac{5}{\dist(x,\p Q)} \right)^P$. On the other hand, it is clear that $M_P \geq 1$ and $0<c_2<1$. Thus, the desired estimate \eqref{eq:ee1} follows from \eqref{eq:passage3} and \eqref{eq:passage4} with the constant $C_0$ evaluated by \eqref{valeur_C0}.

The proof is complete.
\end{proof}
%%%%%%%%%%%%%%%%%%%%%%%%%%%%%%%%%%%%%%%%%%%%%%
\begin{proposition}\label{propagation3}
Let $u,u'\in H^1_{loc}(\R^2)$ be the solutions of the scattering problems \eqref{eq:helmholtz_total} under the assumptions of Theorem \ref{main-theorem}. Let $Q$ be the polygonal convex hull of $D$ and $D'$, and $x_c$ be a vertex of $Q$. We assume that $u-u'$ is of class $\mathcal{C}^{\a_0}$ in $B_{2R}\setminus \overline{Q}$ and that the function $x\mapsto |x-x_c| \n (u-u')(x)$ is of class $\mathcal{C}^{\a_1}$ in $B_{2R}\setminus \overline{Q}$ where $0<\a_0,\a_1<1$. The corresponding H\"older norms are denoted respectively by $T_0:=\| u-u' \|_{\mathcal{C}^{\a_0}(B_{2R}\setminus \overline{Q})}$ and $T_1:=\| |x-x_c| \n(u-u') \|_{\mathcal{C}^{\a_1}(B_{2R}\setminus \overline{Q})}$.

Then there exists $\e_m>0$ depending only on $k, \g, R_m, R,\a_0,\a_1, \mathcal{Q}, S$ and $\widetilde{C_j}(k,R,\a_j)>0$, $j=0,1$ such that if $\| u_\infty-u'_\infty\|_{L^2(\S^1)}\leq \e\leq \e_m$, it holds that
\begin{align}
|u(x)-u'(x)|&\leq \widetilde{C_0}T_0 \left(\ln\ln \frac{S}{\e} \right)^{-\a_0},\label{eq:u-u'}\\
|\n (u-u')(x)| &\leq \frac{\widetilde{C_1}T_1}{\dist(x,\p Q)} \left(\ln\ln \frac{S}{\e} \right)^{-\a_1},\label{eq:grad_u-u'} 
\end{align}
for $x\in B_{3R/2}\setminus \overline{Q}$.
\end{proposition}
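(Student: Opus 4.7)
The plan is to combine \Cref{propagation1} (far-field to near-field) with \Cref{propagation2} (near-field to boundary) applied to both $u-u'$ and to each component of $\nabla(u-u')$. The key observation that makes this strategy legitimate is that since $\mathrm{supp}(\sigma-1)\cup\mathrm{supp}(\sigma'-1)\cup\mathrm{supp}(q-1)\cup\mathrm{supp}(q'-1)\subset D\cup D'\subset Q$, the difference $u-u'=u^s-(u')^s$ solves the homogeneous Helmholtz equation $(\Delta+k^2)(u-u')=0$ in $\R^2\setminus\overline{Q}$, satisfies the Sommerfeld radiation condition, and has far-field pattern $u_\infty-u'_\infty$. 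By differentiating the equation, each Cartesian component of $\nabla(u-u')$ also solves the homogeneous Helmholtz equation in $\R^2\setminus\overline{Q}$.

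First, I would fix an annular domain $\mathcal{A}\Subset B_{7R/4}\setminus\overline{B_{5R/4}}$, and use the well-posedness of \eqref{eq:helmholtz_total} together with the a-priori bound $\|u^i\|_{H^2(B_{2R})}\le S$ from \Cref{def_admis_inc} to deduce an a-priori estimate $\|u-u'\|_{L^2(B_{2R}\setminus B_R)}\le \mathcal{S}$ with $\mathcal{S}\lesssim S$. Applying \Cref{propagation1} with $p$ chosen large enough that $H^p(\mathcal{A})\hookrightarrow W^{1,\infty}(\mathcal{A})$, and using interior elliptic estimates for the Helmholtz equation, I obtain
\begin{equation}\label{eq:prop3_near}
\|u-u'\|_{L^\infty(\mathcal{A})}+\|\nabla(u-u')\|_{L^\infty(\mathcal{A})}\le C\max\bigl(\e,\,\mathcal{S}\,e^{-c\sqrt{\ln(\mathcal{S}/\e)}}\bigr)=:\delta.
\end{equation}
For $\e$ below a threshold $\e_m=\e_m(k,\gamma,R_m,R,\alpha_0,\alpha_1,\mathcal{Q},S)$, the maximum in \eqref{eq:prop3_near} is realised by the second term, and we may further ensure $\delta\le\delta_m$, where $\delta_m$ is the smallness threshold appearing in \eqref{critere_d} for both $\alpha=\alpha_0$ and $\alpha=\alpha_1$.

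Next, I apply \Cref{propagation2} to $w=u-u'$ with $P=0$, $\alpha=\alpha_0$ and $T=T_0$: since $\mathrm{dist}(x,\partial Q)^0=1$, this delivers
\[
|u(x)-u'(x)|\le C_0T_0\,(\ln|\ln\delta|)^{-\alpha_0},\qquad x\in B_{3R/2}\setminus\overline{Q}.
\]
In parallel, I apply \Cref{propagation2} componentwise to $w=\nabla(u-u')$ with $P=1$, $\alpha=\alpha_1$ and $T=T_1$, using the hypothesis that $x\mapsto|x-x_c|\nabla(u-u')(x)\in\mathcal{C}^{\alpha_1}$, to obtain the weighted bound
\[
|\nabla(u-u')(x)|\le\frac{C_0T_1}{\mathrm{dist}(x,\partial Q)}\,(\ln|\ln\delta|)^{-\alpha_1}.
\]
Finally, I convert $\ln|\ln\delta|$ into $\ln\ln(S/\e)$: from the explicit form of $\delta$ one has $|\ln\delta|\ge \tfrac{c}{2}\sqrt{\ln(\mathcal{S}/\e)}$ for $\e$ small, hence $\ln|\ln\delta|\ge\tfrac{1}{2}\ln\ln(\mathcal{S}/\e)+O(1)\ge c'\ln\ln(S/\e)$, which after absorbing constants into $\widetilde{C_0},\widetilde{C_1}$ yields \eqref{eq:u-u'} and \eqref{eq:grad_u-u'}.

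The main obstacle is purely bookkeeping: coordinating the thresholds $\e_m$ and $\delta_m$ so that \Cref{propagation1} and \Cref{propagation2} are both applicable with the same $\delta$, and verifying that the Hölder hypotheses of \Cref{propagation2} transfer correctly to the gradient case with the weight $|x-x_c|$ appearing as the factor that cancels the $P=1$ singularity. No new PDE ideas are required beyond chaining the two propagation estimates.
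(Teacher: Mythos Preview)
Your proposal is correct and follows essentially the same route as the paper's proof: obtain an a-priori $H^1$ bound from well-posedness, apply \Cref{propagation1} on the annulus $B_{7R/4}\setminus\overline{B_{5R/4}}$ to control $u-u'$ and $\nabla(u-u')$ there by a quantity $\delta$, then feed this $\delta$ into \Cref{propagation2} (with $P=0$ for $u-u'$ and $P=1$ for $\nabla(u-u')$), and finally unwind $\ln|\ln\delta|$ in terms of $\ln\ln(S/\e)$. The only cosmetic discrepancy is that you take $\mathcal{A}\Subset B_{7R/4}\setminus\overline{B_{5R/4}}$ whereas \Cref{propagation2} requires the smallness bound on the full annulus $B_{7R/4}\setminus\overline{B_{5R/4}}$; just take $\mathcal{A}$ equal to that annulus, as the paper does.
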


\begin{proof}
The well-posedness of the forward problem stands that for any solution $v$ to the system \eqref{eq:helmholtz_total}, it holds that 
\begin{equation}\label{eq:well_forward}
\| v \|_{H^1(B_{2R})} \leq C(R, \g, |D|, \| q \|_{L^\infty(\R^2)}) \| u^i \|_{H^2(B_{2R})},
\end{equation}
for some $C(R, \g, |D|, \| q \|_{L^\infty(\R^2)})>0$ independent of the geometrical shape of $D$. Thus, it follows from Definition \ref{def_admis_inc} and \eqref{eq:h2apriori} that
\begin{equation}\label{eq:well_forward2}
\| u-u' \|_{H^1(B_{2R})} \leq \| u \|_{H^1(B_{2R})}+\| u' \|_{H^1(B_{2R})} \leq  C_f S,
\end{equation}
where $C_f>0$ depends only on $R$, $\gamma_M$, $\mathcal{Q}$.

We next apply Proposition \ref{propagation1} on the region $\mathcal{A}=B_{7R/4}\setminus \overline{B_{5R/4}}$ with $w^s=u-u' \in H^2_{loc}(\R^2\setminus B_R)$. At the same time, we use also the Sobolev embedding $H^1 \hookrightarrow L^\infty$ in $\R^2$. We then have, 
\begin{equation}\label{eq:passage5}
|u-u'|_{L^\infty(\mathcal{A})},|\n (u- u')|_{L^\infty(\mathcal{A})}\leq  C(R) \| u-u' \|_{H^2(\mathcal{A})} \leq C\max(\e, C_f S e^{-c\sqrt{\ln(C_f S/\e)}}),
\end{equation}
where $c,C>0$ depend only on $k,R$. We can choose now 
\begin{equation}\label{eq:passage6}
\e_m \leq C_f S e^{-c^2},
\end{equation}
such that \eqref{eq:passage5} holds for the second term in the maximum function if $\e \leq \e_m$.

Defining
\[\d =C C_f S e^{-c\sqrt{\ln(C_f S/\e)}},\]
we can therefore choose $\e_m>0$ depending only on $k,R,\gamma_M,\a,\mathcal{Q},S$ such that \eqref{eq:passage6} and \eqref{critere_d} hold. Then we apply Proposition \ref{propagation2}, and it follows that for $x\in B_{3R/2}\setminus \overline{Q}$,
\begin{align*}
&|u(x)-u'(x)|\leq \widetilde{C_0}T_0 \left(\ln |\ln \d|\right)^{-\a_0},\\
&|\n(u-u')(x)|\leq \frac{\widetilde{C_1}T_1}{\dist(x,\p Q)} \left(\ln |\ln \d|\right)^{-\a_1},
\end{align*}
where the constants $\widetilde{C_0}$ and $\widetilde{C_1}$ are given by \eqref{valeur_C0}, which depend only on $k$, $\a_0$, $\a_1$, and $R$. Our choice of $\d$ implies the following inequalities,
\begin{align*}
    |\ln \d|&=c\sqrt{\ln \frac{C_f S}{\e}}-\ln(CC_f S) =c\sqrt{\ln C_f +\ln \frac{S}{\e}}-\ln(CC_f S)\\
    &\geq \frac{c}{2}\sqrt{\ln \frac{S}{\e}} \geq \left(\ln \frac{S}{\e}\right)^{1/4},
\end{align*}
under the condition that $\e_m$ is small enough. Here, we can update the value of $\e_m$, which depends only on $k,R,\gamma_M,\a,\mathcal{Q},S$. Thus, for $\a=\a_0$ or $\a_1$,
\[\left(\ln|\ln \d|\right)^{-\a}\leq \left(\ln(\ln \frac{S}{\e})^{1/4}\right)^{-\a}=4^\a \left(\ln \ln \frac{S}{\e}\right)^{-\a}.\]
By updating the values of $\widetilde{C_0}$ and $\widetilde{C_1}$, \eqref{eq:u-u'} and \eqref{eq:grad_u-u'} hold.

The proof is complete.
\end{proof}
%%%%%%%%%%%%%%%%%%%%%%%%%%%%%%%%%%%%%%%%%%%%%%
\section{Local analysis and proof of the stability result}\label{Proof_M}
In this section, we analyze locally the behavior of the solutions $u,u'$ near a polygonal corner point in order to link the geometrical parameters and the estimations of $u-u'$. This in turn enables to prove the main stability result. 

\subsection{Microlocal analysis near a vertex}\label{LocalA}
\begin{lemma}\label{lemme_geo}
Let $D,D'\subset \R^2$ be two open bounded convex polygons. Let $Q$ be the convex hull of $D\cup D'$. If $x_c$ is a vertex of $D$ such that $\mathrm{dist}(x_c,D')=\h$, where $\h$ gives the Hausdorff distance,
\begin{equation}\label{eq:haus}
\h=d_\mathcal{H}(D, D'),
\end{equation}
then $x_c$ is a vertex of $Q$. If the angle of $D$ at $x_c$ is $a$, then the angle of $Q$ at $x_c$ is at most $(a+\pi)/2<\pi$.
\end{lemma}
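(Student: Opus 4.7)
Let $q^*\in D'$ be the unique projection of $x_c$ onto the closed convex set $D'$, and set $u^*:=(q^*-x_c)/\h$. The plan is to first identify a half-plane through $x_c$ that contains $Q$, and then to quantify the angular spread of $T_{D'}(x_c):=\mathrm{cone}(D'-x_c)$ relative to the tangent cone $T_D(x_c)$ using the Hausdorff assumption. Both the vertex claim and the opening bound will then follow together, since opening $<\pi$ forces $x_c$ to be a genuine (non-flat) boundary point. The main obstacle I anticipate is the angular-trapping step, which is where the defining hypothesis $\mathrm{dist}(x_c,D')=\h$ is really used.

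\textbf{Step 1 (half-plane containment).} Since $x_c$ maximises the convex function $d(\cdot,D')$ over the convex polygon $D$ and $x_c\notin D'$, the first-order optimality condition for a constrained maximum yields $\nabla d(x_c)=-u^*\in N_D(x_c)$, hence $u^*\cdot(z-x_c)\geq 0$ for every $z\in D$. The variational characterisation of the projection onto $D'$ gives $u^*\cdot(y-q^*)\geq 0$ for every $y\in D'$, so $u^*\cdot(y-x_c)\geq \h$ on $D'$. Therefore $Q=\mathrm{conv}(D\cup D')\subseteq\{y:\,u^*\cdot(y-x_c)\geq 0\}$, and $x_c$ lies on the bounding line with $-u^*$ a supporting outward normal of $Q$ at $x_c$.

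\textbf{Step 2 (angular trapping of $T_{D'}(x_c)$).} Fix $y\in D'$, set $v:=y-x_c$, and let $\beta$ denote the angle between $v$ and $u^*$ and $\theta$ the angular distance from $v/|v|$ to the cone $T_D(x_c)$. The hypothesis $\mathrm{dist}(x_c,D')=\h$ gives $|v|\geq\h$; Step 1 gives $|v|\cos\beta=u^*\cdot v\geq\h$; and the Hausdorff assumption $d(y,D)\leq\h$, combined with the inclusion $D-x_c\subseteq T_D(x_c)$, gives $\mathrm{dist}(v,T_D(x_c))\leq\h$, so that $|v|\sin\theta\leq\h$ when $\theta\in(0,\pi/2]$. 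Dividing these through by $|v|$ yields $\sin\theta\leq\cos\beta$, whence
\[
\theta+\beta\leq \pi/2.
\]
The case $\beta>\pi/2$ would contradict $u^*\cdot v>0$, while $\theta>\pi/2$ would force $|v|=\h$ and $\beta=0$, i.e.\ $v\parallel u^*$, in contradiction with $u^*\in T_D(x_c)^*$ (the latter forcing the angular distance of $u^*$ itself from $T_D(x_c)$ to be $\leq\pi/2$).

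\textbf{Step 3 (opening bound and conclusion).} Parameterise unit directions by angle, placing $u^*$ at $0$ and $T_D(x_c)=[\theta_0,\theta_0+a]$ with $\theta_0\in[-\pi/2,\pi/2-a]$ (forced by $u^*\in T_D(x_c)^*$). A short case analysis of the inequality $\theta(\psi)+|\psi|\leq\pi/2$ derived in Step 2 shows that every $\psi$ in $T_{D'}(x_c)\cup T_D(x_c)$ lies in the single angular interval
\[
[\,\theta_0/2-\pi/4,\ \theta_0/2+a/2+\pi/4\,],
\]
whose width is $(a+\pi)/2$. Consequently $T_Q(x_c)=\mathrm{conv}(T_D(x_c)\cup T_{D'}(x_c))$ is contained in this interval, and the opening of $T_Q(x_c)$ is at most $(a+\pi)/2<\pi$. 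Since the opening is strictly smaller than $\pi$, $x_c$ is indeed a genuine vertex of $Q$, and the displayed inequality is the asserted angular bound.
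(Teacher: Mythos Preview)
The paper does not actually prove this lemma; its entire proof reads ``See the appendix in \cite{Blasten2020}.'' Your argument is therefore a self-contained substitute for an external citation, and it is correct.

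Your approach is a clean convex-analysis argument: the maximality of $d(\cdot,D')$ at $x_c$ over $\overline{D}$ gives $-u^*\in N_D(x_c)$ and hence a supporting half-plane for $Q$ through $x_c$; then for each $y\in D'$ the two Hausdorff inequalities $u^*\cdot(y-x_c)\geq\h$ and $\mathrm{dist}(y-x_c,T_D(x_c))\leq\h$ combine, after dividing by $|y-x_c|\geq\h$, into the angular trap $\theta+\beta\leq\pi/2$. The case analysis in Step~3 then pins $T_D(x_c)\cup T_{D'}(x_c)$ inside a single arc of width $(a+\pi)/2$, which is exactly the claimed bound. This is sharper and more conceptual than simply deferring to another paper; it also makes transparent \emph{where} the Hausdorff hypothesis is used (both halves of it, in Step~2).

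Two cosmetic points worth one line each in a final version: (i) the differentiability of $d(\cdot,D')$ at $x_c$ requires $x_c\notin\overline{D'}$, which follows from $\h>0$ but should be said; (ii) the identity $T_Q(x_c)=\overline{\mathrm{conv}\bigl(T_D(x_c)\cup T_{D'}(x_c)\bigr)}$ is the standard fact that the tangent cone of a convex hull is the conic hull of the constituent tangent cones, and a one-sentence justification would make Step~3 fully rigorous.
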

%%%%%%%%%%%%%%%%%%%%%%%%%%%%%%%%%%%%%%%%%%%%%%%%%%%%%%%%%%%%%%

\begin{proof}
See the appendix in \cite{Blasten2020}.
\end{proof}
%%%%%%%%%%%%%%%%%%%%%%%%%%%%%%%%%%%%%%%%%%%%%%%%%%%%%%%%%%%%%%
We proceed to present our microlocal analysis around a corner. Let $D$, $D'\in \mathcal{D}$ be two admissible polygons (cf. Definition ~\ref{def:class}). We assume from now on that $D\neq D'$. Let $x_c\in \overline{D}$ be a vertex in Lemma \ref{lemme_geo}. Then there exists $h\in\mathbb{R}_+$ such that $B(x_c,h)\cap D'=\emptyset$. Let $Q$ be the convex hull of $D\cup D'$ and $\B$ be the open disk $B(x_c,h)$. We denote respectively by $\widetilde{D}$ and $\widetilde{Q}$ the sectors $\B\cap D$ and $\B\cap Q$. Let $b$ signify the opening of the angle of $Q$ at $x_c$. Then one has $a_m \leq b \leq (a_M+\pi)/2<\pi$ by Lemma \ref{lemme_geo}. We choose the polar coordinate system such that $x_c$ is the origin point and $\widetilde{Q}$ coincides with the following sector,
\[\widetilde{Q}=\{(r,\theta)|0<r< h,-b/2 < \theta < b/2\}.\]
We define at the same time the unit vectors $\hat{x}$ and $\hat{y}$ to represent respectively the directions $\t=0$ and $\t=\pi/2$.

We next define the integral contours on which we derive the estimates; see Figure \ref{coin} for a schematic illustration. Let
\begin{eqnarray}
\ds \Gamma^\pm &:=& \p D \cap \B,\nonumber\\
\ds \p S^i_D &:=& \p \B \cap D \subset \p S^i_Q,\nonumber\\
\ds \p S^i_Q &:= & \{(r,\theta)| r=h,-\frac{\pi+b}{4}\leq \theta\leq \frac{\pi+b}{4}\},\nonumber
\end{eqnarray}
and $\p S^e$ be a circular arc passing through the following three points in the polar coordinate: $(h,\frac{\pi+b}{4})$, $(-\frac{1}{\tau},\pi)$, $(h,-\frac{\pi+b}{4})$ for a $\tau>0$. 
%Here, the idea is to construct the contour $\p S^e$ such that $(x-x_c)\cdot \hat{x}\geq -\frac{1}{\tau}$ and $\mathrm{dist}(x,\p Q)\geq \frac{1}{2\tau}$ for all $x \in \p S^e$.
We define moreover by $\widetilde{D}^e$ the region surrounded by the closed contour $\overline{\Gamma^\pm \cup (\p S^i_Q \setminus \p S^i_D) \cup \p S^e}$ (the complementary part of $\widetilde{D}$; cf. Figure \ref{coin}). 
\begin{figure}[!ht]
\includegraphics[scale=2.2]{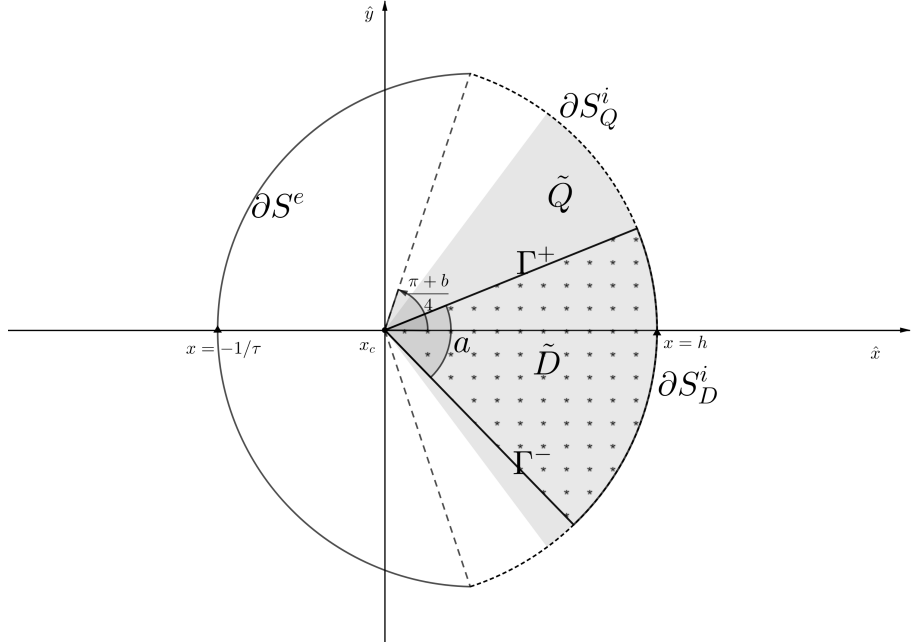}
\caption{The sectors $\widetilde{D}$, $\widetilde{Q}$ and the integral contours.}
\label{coin}
\end{figure}
{
\begin{remark}\label{contour}
There are other ways to construct the integral contours here, especially for $\p S^e$. The idea to construct such contours is to derive some suitable properties we can gain in the next steps. From the above construction, together with direct geometric arguments, we can obtain the following propperties:
\begin{enumerate}
\item for $x \in \widetilde{Q}$, $(x-x_c)\cdot \hat{x} \geq \cos(\frac{\pi+b}{4})|x-x_c|$;
\item for $x\in \widetilde{D^e}\cup \p S^e$, $(x-x_c)\cdot \hat{x} \geq -\frac{1}{\tau}$;
\item if $\tau \geq \tau_0:=C_{\tau_0}h^{-1}=1/[2h\sin(\frac{\pi-b}{4})]$, then $\dist(x,\p Q) \geq \frac{1}{2\tau}$ for $x\in \p S^e$.
\end{enumerate}
These properties and notations will be needed and used in our subsequent analysis.
\end{remark}
}
\begin{proposition}
Let $u,u'$ be the solutions to \eqref{eq:helmholtz_total} in Theorem ~\ref{main-theorem} and $u_0$ be a harmonic function in $\B$. Then it holds that
\begin{multline}\label{IntId}
(1-\gamma)\int_{\Gamma^\pm}u_0\p_\nu u\, ds=\\
\int_{\p S^i_Q \cup \p S^e} (u-u')\p_\nu u_0 - u_0\p_\nu(u-u')\, ds - k^2\int_{\widetilde{D}^e}(u-u')u_0 dx-\frac{k^2 q}{\gamma}\int_{\widetilde{D}}u_0 u\, dx,
\end{multline}
where the integral over $\Gamma^\pm$ is taken the values of $\p_\nu u$ in the interior of $\widetilde{D}$.
\end{proposition}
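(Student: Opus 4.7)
The plan is to derive the integral identity by applying Green's second identity separately on the interior sector $\widetilde{D}$ and on the neighbouring exterior region $\widetilde{D}^e$, then gluing the two contributions along the shared interface $\Gamma^\pm$ by means of the transmission conditions enforced on $\p D$. The harmonicity of $u_0$ throughout $\B$ (which contains both $\widetilde{D}$ and $\widetilde{D}^e$, since $\p S^e$ has been designed to stay inside $\B$) makes every $u_0\Delta u_0$ term disappear, so only boundary integrals and the Helmholtz-type volume terms survive.

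First I would note that on $\widetilde{D}\subset D$, the piecewise-constant coefficient $\sigma$ equals $\gamma$, so the PDE reduces to $\Delta u=-(k^2q/\gamma)u$. Combined with $\Delta u_0=0$, Green's identity on $\widetilde{D}$ gives
\[
-\frac{k^2q}{\gamma}\int_{\widetilde{D}}u_0u\,dx=\int_{\Gamma^\pm}\!\bigl[u_0\p_\nu u^--u\,\p_\nu u_0\bigr]ds+\int_{\p S^i_D}\!\bigl[u_0\p_\nu u-u\,\p_\nu u_0\bigr]ds,
\]
with $\nu$ the outward normal to $\widetilde{D}$ and $\p_\nu u^-$ the interior trace. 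Second, because $\widetilde{D}^e\subset \B\setminus D$ and $\B\cap D'=\emptyset$, both $u$ and $u'$ satisfy the free Helmholtz equation in $\widetilde{D}^e$, so $\Delta(u-u')=-k^2(u-u')$ there, and Green's identity applied to $(u-u',u_0)$ on $\widetilde{D}^e$ produces
\[
-k^2\!\int_{\widetilde{D}^e}u_0(u-u')dx=\int_{\p\widetilde{D}^e}\!\bigl[u_0\p_{\nu_e}(u-u')-(u-u')\p_{\nu_e}u_0\bigr]ds,
\]
where $\nu_e$ is outward from $\widetilde{D}^e$, opposite to $\nu$ on $\Gamma^\pm$.

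I would then add the two identities. On $\Gamma^\pm$ the transmission conditions yield $[u]=0$ and $[\sigma\p_\nu u]=0$, so $\p_\nu u^+=\gamma\,\p_\nu u^-$; since $u'$ is smooth across $\Gamma^\pm$ (because $D'\cap\B=\emptyset$), the terms $u\p_\nu u_0$ cancel via continuity of the trace, while the flux jump produces precisely the desired combination $(1-\gamma)\,u_0\,\p_\nu u^-$. The remaining arc-integrals on $\p S^i_D$ (carrying $u$) and on $\p S^i_Q\setminus\p S^i_D$ (carrying $u-u'$) are combined by writing $u=(u-u')+u'$ on $\p S^i_D$: this merges them into a single integral over $\p S^i_Q$ involving $u-u'$, and the leftover smooth $u'$-terms on $\Gamma^\pm\cup\p S^i_D=\p\widetilde{D}$ reassemble into one Green identity for the smooth pair $(u',u_0)$ on $\widetilde{D}$, which can be converted into a volume term. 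Rearranging then isolates $(1-\gamma)\int_{\Gamma^\pm}u_0\p_\nu u\,ds$ on the left, leaving the three contributions $\int_{\p S^i_Q\cup\p S^e}[(u-u')\p_\nu u_0-u_0\p_\nu(u-u')]ds$, $-k^2\int_{\widetilde{D}^e}(u-u')u_0\,dx$, and $-\frac{k^2q}{\gamma}\int_{\widetilde{D}}u_0u\,dx$ on the right.

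The main obstacle is the bookkeeping: the outward normals to $\widetilde{D}$ and $\widetilde{D}^e$ are opposite on $\Gamma^\pm$, the three boundary pieces $\Gamma^\pm$, $\p S^i_D$ and $\p S^i_Q\setminus\p S^i_D$ must be recombined consistently on $\p S^i_Q\cup\p S^e$, and the transmission jump must be applied cleanly enough to extract exactly $(1-\gamma)u_0\p_\nu u$ rather than $(\gamma-1)u_0\p_\nu u$ or a mixture with $\p_\nu u^+$. Once these sign conventions are fixed and the smoothness of $u'$ across $\Gamma^\pm$ is exploited to absorb residual $u'$-terms by a final Green identity on $\widetilde{D}$, the stated identity follows by pure algebra.
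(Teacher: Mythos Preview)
Your plan is exactly the one the paper intends: its entire proof reads ``transmission conditions of $u$ and $u'$ \dots\ as well as Green's formula along with straightforward calculations,'' and your two-piece Green's identity on $\widetilde D$ and $\widetilde D^e$, glued across $\Gamma^\pm$ via $u^+=u^-$ and $\p_\nu u^+=\gamma\,\p_\nu u^-$, is precisely that.

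One bookkeeping point deserves more care than you give it. You correctly observe that, after writing $u=(u-u')+u'$ on $\p S^i_D$, the residual $u'$-terms live on $\Gamma^\pm\cup\p S^i_D=\p\widetilde D$ and can be collapsed by Green's identity for the pair $(u',u_0)$ on $\widetilde D$. But since $\Delta u'+k^2u'=0$ in $\B\supset\widetilde D$ and $\Delta u_0=0$, that step produces
\[
\int_{\p\widetilde D}\bigl(u_0\,\p_\nu u'-u'\,\p_\nu u_0\bigr)\,ds=-k^2\!\int_{\widetilde D}u_0u'\,dx,
\]
which is a genuine nonzero volume term. Carrying it through, the right-hand side of the identity acquires an additional $+\,k^2\int_{\widetilde D}u_0u'\,dx$ that is \emph{not} among the three contributions you list (nor in the stated formula~\eqref{IntId}). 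Your sentence ``leaving the three contributions on the right'' glosses over this. The discrepancy is harmless for everything downstream --- the extra term is controlled exactly like $I_9=\int_{\widetilde D}u_0u\,dx$ in Proposition~\ref{Propo_Upper} by $(\tau^{-1}+he^{-\a'\tau h})\|u'\|_{H^1(\widetilde D)}$ --- but you should either record it explicitly or note that \eqref{IntId} as printed appears to omit it.
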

%%%%%%%%%%%%%%%%%%%%%%%%%%%%%%%%%%%%%%%%%%%%%%%%%%%

\begin{proof}
This proposition follows from the transmission conditions of $u$ and $u'$ respectively on $\partial D$ and $\partial D'$ as well as Green's formula along with straightforward calculations.
\end{proof} 

We next introduce a special type of harmonic functions which are the so-called complex geometric optics (CGO) solutions. In this paper, we define the CGO solution as follows. Let $\tau>0$, we choose $\rho=\rho(\tau):=\tau(-\hat{x}+\mathrm{i}\hat{y})\in \C^2$. For all $x\in \R^2$,
\begin{equation}\label{CGO}
u_0(x)=e^{\rho \cdot (x-x_c)}.
\end{equation}
It is easy to check that $\rho \cdot \rho=0$ and thus $u_0$ is harmonic in $\R^2$.

\begin{proposition}\label{Propo_Upper}
Let $u,u'$ be the solutions to \eqref{eq:helmholtz_total} and satisfy the assumptions in Theorem ~\ref{main-theorem}. Let $\tau>0$ and $u_0$ be a CGO solution defined by (\ref{CGO}). We denote by $\Gamma^\pm_\infty$ the two rays originating from the origin and extending the segments $\Gamma^\pm$ to infinity (cf. Figure \ref{coin}). Then it holds that
\begin{align}\label{IntId2}
\int_{\Gamma^\pm_\infty}  u_0 \p_\nu u_{sing}\, ds &=\ds\int_{\Gamma^\pm_\infty \setminus \Gamma^\pm}u_0 \p_\nu u_{sing}\, ds+\int_{\p S^i_D}u_0\p_\nu u_{reg}\, ds \nonumber \\
 &-\int_{\widetilde{D}}\n u_0\n u_{reg}\, dx + \frac{1}{1-\gamma}\int_{\p S^i_Q \cup \p S^e} (u-u')\p_\nu u_0-u_0\p_\nu(u-u')\, ds \nonumber \\
 & - \frac{k^2}{1-\gamma}\int_{\widetilde{D}^e}(u-u')u_0\, dx - \frac{k^2 q}{1-\gamma}\int_{\widetilde{D}}u_0 u\, dx,
\end{align}
where $u_{sing}:=K r^\eta \phi(\t)$ and $u_{reg}$ represent the singular decomposition \eqref{eq:decom_sol} of $u$ near $x_c$. Here, we assume the characteristics in \eqref{eq:decom_sol} of $u_{sing}$ and $\zeta$ satisfy $0<\eta_m\leq\eta \leq \eta_M<1$ and $\varrho \geq h$. Then the following estimate holds,
\begin{align}\label{UpperBonds}
& C|\int_{\Gamma^\pm_\infty}u_0 \p_\nu u_{sing} ds |\leq |K|\tau^{-\eta}e^{-\a'\tau h/2}+\tau^{-1}\| u_{reg}\|_{H^2(\widetilde{D})}+h e^{-\a'\tau h}\| u_{reg}\|_{H^2(\widetilde{D})}\nonumber\\
& +h e^{-\a' \tau h}(\| \p_\nu (u-u') \|_{L^\infty(\p S^i_Q)}+\tau\| u-u' \|_{L^\infty(\p S^i_Q)})\nonumber\\
&+h(\| \p_\nu (u-u') \|_{L^\infty(\p S^e)}+\tau\| u-u' \|_{L^\infty(\p S^e)})\nonumber \\
&+h^2\| u-u' \|_{L^\infty(\widetilde{D}^e)}+(\tau^{-1}+he^{-\a'\tau h})\| u\|_{H^1(\widetilde{D})},
\end{align}
where $\a'=\cos(\frac{\pi+b}{4})>0$, and $C$ depends only on the parameters $k, \g, \mathcal{Q}, \eta_m, \eta_M, a_m, a_M$. 
\end{proposition}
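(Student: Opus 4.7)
The plan is to establish the identity first and then bound each of its seven terms.

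For the integral identity \eqref{IntId2}, I would start from \eqref{IntId}, divide both sides by $(1-\gamma)$, and insert the decomposition $u=u_{sing}+u_{reg}$ from Theorem \ref{theo_decom_sol} on $\widetilde{D}$. Because the cutoff $\zeta\equiv 1$ on $\widetilde{D}$ (since $\varrho\ge h$), the model singular part $u_{sing}=Kr^{\eta}\phi(\theta)$ is genuinely harmonic there, so $\Delta u_{reg}=\Delta u=-k^{2}qu/\gamma$ inside $\widetilde{D}$. I would then split $\int_{\Gamma^{\pm}}u_0\partial_{\nu}u\,ds$ into singular and regular contributions, extend the singular contour from $\Gamma^{\pm}$ to the semi-infinite rays $\Gamma^{\pm}_{\infty}$ (creating the tail correction $\int_{\Gamma^{\pm}_{\infty}\setminus\Gamma^{\pm}}u_0\partial_{\nu}u_{sing}\,ds$), and apply Green's identity $\int_{\partial\widetilde{D}}u_0\partial_{\nu}u_{reg}\,ds=\int_{\widetilde{D}}\nabla u_0\cdot\nabla u_{reg}\,dx+\int_{\widetilde{D}}u_0\Delta u_{reg}\,dx$ (using $\Delta u_0=0$) to the regular part, isolating $\int_{\Gamma^{\pm}_{\infty}}u_0\partial_{\nu}u_{sing}\,ds$ on the left. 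The two copies of $\int_{\widetilde{D}}quu_0\,dx$ thus produced --- one from \eqref{IntId} with coefficient $-\frac{1}{1-\gamma}\cdot\frac{k^{2}q}{\gamma}$ and one from $\Delta u_{reg}$ with coefficient $+\frac{k^{2}q}{\gamma}$ --- combine via $\tfrac{1}{\gamma}(1-\tfrac{1}{1-\gamma})=-\tfrac{1}{1-\gamma}$, producing precisely the coefficient $-\frac{k^{2}q}{1-\gamma}$ in \eqref{IntId2}.

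For the estimate \eqref{UpperBonds}, I would bound each of the seven terms in \eqref{IntId2}, making systematic use of the pointwise formula $|u_0(x)|=e^{-\tau(x-x_{c})\cdot\hat{x}}$. By property (1) of Remark \ref{contour} this gives $|u_0|\le e^{-\alpha'\tau|x-x_c|}$ throughout $\widetilde{Q}\supset\widetilde{D}\cup\Gamma^{\pm}$, while property (2) gives $|u_0|\le e^{\tau/\tau_0}=O(1)$ on $\widetilde{D}^{e}\cup\partial S^{e}$, and in all cases $|\nabla u_0|=\sqrt{2}\tau|u_0|$. The tail $\int_{\Gamma^{\pm}_{\infty}\setminus\Gamma^{\pm}}u_0\partial_{\nu}u_{sing}\,ds$ reduces via $|\partial_{\nu}u_{sing}|\lesssim|K|\eta r^{\eta-1}$ to an incomplete Gamma tail $\int_{h}^{\infty}e^{-\alpha'\tau r}r^{\eta-1}\,dr\lesssim\tau^{-\eta}e^{-\alpha'\tau h/2}$, which is the first term of \eqref{UpperBonds}. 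The $\partial S^{i}_{D}$ boundary term uses $|u_0|\le e^{-\alpha'\tau h}$ on the arc of length $\lesssim h$ together with the trace bound $\|\partial_{\nu}u_{reg}\|_{L^{2}(\partial S^{i}_{D})}\lesssim\|u_{reg}\|_{H^{2}(\widetilde{D})}$. The contour integrals on $\partial S^{i}_{Q}$ and $\partial S^{e}$ use the respective exponential and $O(1)$ bounds on $u_0$ and $|\nabla u_0|$, directly together with the $L^{\infty}$-norms of $u-u'$ and $\partial_{\nu}(u-u')$ on those contours; the factor $\tau$ next to $\|u-u'\|_{L^{\infty}}$ records $|\partial_{\nu}u_0|\le\sqrt{2}\tau|u_0|$. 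The volume integral over $\widetilde{D}^{e}$ is controlled by area $|\widetilde{D}^{e}|\lesssim h^{2}$ times $\|u-u'\|_{L^{\infty}(\widetilde{D}^{e})}$. For the volume integral $\int_{\widetilde{D}}quu_0\,dx$, the sharp sector estimate
\[
\|u_0\|_{L^{2}(\widetilde{D})}^{2}\le\int_{0}^{h}\int_{-b/2}^{b/2}e^{-2\alpha'\tau r}\,r\,d\theta\,dr\lesssim\tau^{-2}
\]
combined with Cauchy--Schwarz yields the $\tau^{-1}\|u\|_{H^{1}(\widetilde{D})}$ contribution; the complementary $he^{-\alpha'\tau h}\|u\|_{H^{1}(\widetilde{D})}$ piece comes from the subregion of $\widetilde{D}$ bounded away from the vertex, where the uniform bound $|u_0|\le e^{-\alpha'\tau h}$ is available.

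The principal obstacle is the term $\int_{\widetilde{D}}\nabla u_0\cdot\nabla u_{reg}\,dx$: a direct Cauchy--Schwarz loses the crucial $\tau^{-1}$ factor, since the exponential decay of $u_0$ in the sector is exactly cancelled by the prefactor $|\rho|\sim\tau$ in $\nabla u_0$, giving only $\|\nabla u_0\|_{L^{2}(\widetilde{D})}=O(1)$. I would integrate by parts once,
\[
\int_{\widetilde{D}}\nabla u_0\cdot\nabla u_{reg}\,dx=\int_{\partial\widetilde{D}}u_0\,\partial_{\nu}u_{reg}\,ds-\int_{\widetilde{D}}u_0\Delta u_{reg}\,dx,
\]
so that the volume piece is controlled by $\|u_0\|_{L^{2}(\widetilde{D})}\|\Delta u_{reg}\|_{L^{2}(\widetilde{D})}\lesssim\tau^{-1}\|u_{reg}\|_{H^{2}(\widetilde{D})}$; the $\partial S^{i}_{D}$ boundary piece is absorbed into the third summand of \eqref{UpperBonds}, and the remaining $\Gamma^{\pm}$ boundary piece is the delicate one. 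I would handle it by exploiting the antiholomorphic structure of $u_0$: writing $z=(x-x_c)_1+i(x-x_c)_2$ one has $\rho\cdot(x-x_c)=-\tau\bar{z}$, hence $u_0=e^{-\tau\bar{z}}$ is a function of $\bar{z}$ only and $\partial_{z}u_0=0$. Consequently $u_0\,d\bar{z}=-\tau^{-1}du_0$, which permits a further integration by parts along the rays $\Gamma^{\pm}$ that extracts the missing $\tau^{-1}$ factor from the boundary contribution. This antiholomorphic structure of the CGO is the mechanism by which the sharp $\tau^{-1}$ decay survives in the leading-order gradient term and, together with the area estimate $\|u_0\|_{L^{2}(\widetilde{D})}\lesssim\tau^{-1}$, is what ultimately drives the whole estimate.
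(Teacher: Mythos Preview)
Your derivation of the identity \eqref{IntId2} is exactly the paper's: split $u=u_{sing}+u_{reg}$ in \eqref{IntId}, apply Green's formula to $\int_{\Gamma^\pm}u_0\partial_\nu u_{reg}\,ds$, extend $\Gamma^\pm$ to $\Gamma^\pm_\infty$, and combine the two $\int_{\widetilde D}quu_0$ terms via $\frac{1}{\gamma}(1-\frac{1}{1-\gamma})=-\frac{1}{1-\gamma}$. Your bounds on the tail $I_1$, the arc terms $I_2,I_4,\dots,I_7$, and the area terms $I_8,I_9$ are also the paper's (the paper simply cites \cite{moi5} for $I_1$--$I_7$ and argues $I_8,I_9$ as you do).

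The one place where your argument has a genuine gap is the treatment of $I_3=\int_{\widetilde D}\nabla u_0\cdot\nabla u_{reg}\,dx$. After your first integration by parts you are left with $\int_{\Gamma^\pm}u_0\,\partial_\nu u_{reg}\,ds$, and you propose to write $u_0\,dr=-\tau^{-1}e^{i\theta^\pm}\,du_0$ and integrate by parts once more along each ray. That step produces the endpoint evaluation $\partial_\nu u_{reg}(x_c)$ and the line integral $\int_{\Gamma^\pm}u_0\,d(\partial_\nu u_{reg})$. Neither is controlled by the hypothesis $u_{reg}\in H^2(\widetilde D)$: in two dimensions $H^2$ embeds only into $C^{0,\alpha}$ for $\alpha<1$, so $\nabla u_{reg}$ is merely $H^1$, its trace on $\Gamma^\pm$ is merely $H^{1/2}$, and $H^{1/2}$ of a segment neither admits point evaluations nor has an integrable derivative. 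As written, the antiholomorphic trick therefore does not close.

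A clean repair that stays within $H^2$ is to integrate by parts in the \emph{other} direction, using $\Delta u_0=0$:
\[
I_3=\int_{\partial\widetilde D}u_{reg}\,\partial_\nu u_0\,ds.
\]
Now it is $u_{reg}$ (not $\partial_\nu u_{reg}$) that sits on the boundary, and $u_{reg}\in C^{0,\alpha}$ for every $\alpha<1$. Subtract the constant $u_{reg}(x_c)$; the constant contribution vanishes because $\int_{\partial\widetilde D}\partial_\nu u_0\,ds=\int_{\widetilde D}\Delta u_0\,dx=0$. For the remainder one uses $|u_{reg}(x)-u_{reg}(x_c)|\lesssim r^{\alpha}\|u_{reg}\|_{H^2}$ together with $|\partial_\nu u_0|\le\sqrt2\,\tau e^{-\alpha'\tau r}$ on $\Gamma^\pm$, which yields $\tau\int_0^h r^{\alpha}e^{-\alpha'\tau r}\,dr\lesssim\tau^{-\alpha}$; the arc $\partial S^i_D$ contributes the $he^{-\alpha'\tau h}$ piece. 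This gives $|I_3|\lesssim(\tau^{-\alpha}+he^{-\alpha'\tau h})\|u_{reg}\|_{H^2}$ for any $\alpha<1$, which is what the argument in \cite{moi5} actually delivers and is entirely sufficient for the downstream application.
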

%%%%%%%%%%%%%%%%%%%%%%%%%%%%%%%%%%%%%%%%%%%%%%%%%%%%%%
\begin{proof}
Using the decomposition \eqref{eq:decom_sol} and the integral identity \eqref{IntId}, one has
\begin{align}\label{eq:46}
\int_{\Gamma^\pm_\infty}  u_0 \p_\nu u_{sing}\, ds &=\ds\int_{\Gamma^\pm_\infty \setminus \Gamma^\pm}u_0 \p_\nu u_{sing}\, ds - \int_{\Gamma^\pm} u_0 \p_\nu u_{reg}\, ds\nonumber \\
 & + \frac{1}{1-\gamma}\int_{\p S^i_Q \cup \p S^e} (u-u')\p_\nu u_0-u_0\p_\nu(u-u')\, ds \nonumber \\
 & - \frac{k^2}{1-\gamma}\int_{\widetilde{D}^e}(u-u')u_0\, dx - \frac{k^2 q}{\gamma(1-\gamma)}\int_{\widetilde{D}}u_0 u\, dx.
\end{align}
From Theorem \ref{theo_decom_sol}, the singular function $u_{sing}$ is harmonic in $\widetilde{D}$. By further using \eqref{eq:helmholtz_total}, it holds in $\widetilde{D}$ that
\begin{equation}
\Delta u_{reg}+\frac{k^2 q}{\gamma} u=0.
\end{equation}
It follows from Green's formula,
\begin{equation}\label{eq:48}
\int_{\Gamma^\pm} u_0 \p_\nu u_{reg} ds=-\int_{\p S^i_D} u_0 \p_\nu u_{reg} ds+\int_{\widetilde{D}}\n u_0 \n u_{reg} dx - \frac{k^2 q}{\gamma} \int_{\widetilde{D}} u_0 u dx.
\end{equation}
Inserting \eqref{eq:48} into \eqref{eq:46}, we obtain \eqref{IntId2}.\\

We derive next the estimates of each term in the right-hand-side of \eqref{IntId2}. We define the integrals $I_1,\cdots, I_9$ as follows,
\begin{align}
I_1 &=\int_{\Gamma^\pm_\infty \setminus \Gamma^\pm} u_0 \p_\nu u_{sing} ds,  & I_2 &=\int_{\p S^i_D} u_0 \p_\nu u_{reg} ds,\nonumber \\
 I_3 &=\int_{\widetilde{D}} \n u_0 \n u_{reg} dx,  &I_4 &=\int_{\p S^i_Q} (u-u') \p_\nu u_0 ds,\nonumber \\
I_5 &=\int_{\p S^i_Q} u_0 \p_\nu (u-u') ds,  &I_6 &=\int_{\p S^e} (u-u') \p_\nu u_0 ds,\nonumber \\
I_7 &=\int_{\p S^e} u_0 \p_\nu (u-u') ds, & I_8 &=\int_{\widetilde{D}^e} (u-u') u_0 dx,\nonumber \\
I_9 &=\int_{\widetilde{D}}u_0 u dx.
\end{align}
From the construction of the CGO solution $u_0$ and Remark \ref{contour}, we have the following properties, for $x\in \widetilde{D}\cup \p S^i_Q \subset \widetilde{Q}$,
\begin{equation}\label{control_u0}
|u_0(x)|\leq e^{\Re(\rho)\cdot(x-x_c)}\leq e^{-\a'\tau r},
\end{equation}
and for $x\in \widetilde{D}^e \cup \p S^e$,
\begin{equation}\label{control_u_0_Se}
|u_0(x)|\leq e^{\Re(\rho)\cdot(x-x_c)} \leq e^{\tau\cdot \frac{1}{\tau}} = e.
\end{equation}
Using \eqref{control_u0}, \eqref{control_u_0_Se} and the Sobolev embedding $H^1 \hookrightarrow L^\infty$ in $\R^2$, it is shown in the proof of Proposition 4.3 in \cite{moi5},
\begin{align}\label{eq:int_est_1}
|I_1| &\leq C_1 |K| \tau^{-\eta}e^{-\a'\tau h/2}, & |I_2| &\leq C_2 h e^{-\a'\tau h}\| u_{reg}\|_{H^2(\widetilde{D})},\nonumber\\
|I_3| &\leq C_3 (h e^{-\a'\tau h}+\frac{1}{\tau})\| u_{reg}\|_{H^2(\widetilde{D})}, & |I_4| &\leq C_4 \tau h e^{-\a' \tau h} \| u-u'\|_{L^\infty(\p S^i_Q)},\nonumber\\
|I_5| &\leq C_5 h e^{-\a' \tau h} \| \p_\nu(u-u')\|_{L^\infty(\p S^i_Q)}, & |I_6| &\leq C_6 \tau h \| u-u'\|_{L^\infty(\p S^e)},\nonumber\\
|I_7| &\leq C_7 h \| \p_\nu(u-u')\|_{L^\infty(\p S^e)},
\end{align}
where the constants $C_1,\cdots, C_7$ depend only on the {\it a-priori} parameters.

The least two integrals $I_8,I_9$ can be estimated by following the same technique above and we have
\begin{align}\label{eq:int_est_2}
&|I_8| \leq C_8 h^2 \| u-u'\|_{L^\infty(\widetilde{D}^e)},&|I_9|\leq C_9 (he^{-\a'\tau h}+\frac{1}{\tau}) \| u\|_{H^1(\widetilde{D})},
\end{align}
where the constants $C_8,C_9$ depend only on the {\it a-priori} parameters.

Regrouping the estimates in \eqref{eq:int_est_1}, \eqref{eq:int_est_2}, one can readily show that the inequality \eqref{UpperBonds} holds. 

The proof is complete. 
\end{proof}

\begin{proposition}\label{Propo_Lower}
Under the same assumptions in Proposition \ref{Propo_Upper}, it holds that
\begin{equation}\label{LowerBound}
\left|\int_{\Gamma^\pm_\infty}u_0 \p_\nu u_{sing} d\sigma \right|=|K| \Gamma(\eta)\left|\phi'(\theta^+)e^{\mathrm{i} a\eta}-\phi'(\theta^-)\right|\tau^{-\eta}\geq |K| \Gamma(\eta)\sin(a\eta)\tau^{-\eta},
\end{equation}
where $\theta^\pm$ signify the arguments of the vectors along $\Gamma^\pm$.
\end{proposition}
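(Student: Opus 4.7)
The idea is to carry out a direct contour evaluation of the two ray integrals and read off the lower bound from a pleasant trigonometric identity that eliminates the unknown phase of $\phi$.

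First I would parametrize. In polar coordinates based at $x_c$, the rays are $\Gamma^\pm_\infty=\{r\in(0,\infty),\ \theta=\theta^\pm\}$, and on such a ray one computes
$$\rho\cdot(x-x_c)=-\tau r\, e^{-\mathrm{i}\theta^\pm},\qquad u_0(x)=\exp\!\bigl(-\tau r\, e^{-\mathrm{i}\theta^\pm}\bigr).$$
Since $\widetilde D$ locally coincides with the sector $\theta^-<\theta<\theta^+$, the outward conormal to $\widetilde D$ along $\Gamma^\pm$ is $\pm r^{-1}\partial_\theta$, hence $\partial_\nu u_{sing}\big|_{\Gamma^\pm}=\pm K r^{\eta-1}\phi'(\theta^\pm)$ and each ray integral reduces to
$$\pm K\phi'(\theta^\pm)\int_0^\infty r^{\eta-1} e^{-\tau r e^{-\mathrm{i}\theta^\pm}}\,dr.$$

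Next I would evaluate via the Gamma function. By Lemma~\ref{lemme_geo} we have $b<\pi$, so $|\theta^\pm|\le b/2<\pi/2$ and $\Re(\tau e^{-\mathrm{i}\theta^\pm})>0$. Rotating the contour (equivalently, analytically continuing the Laplace transform $\int_0^\infty r^{\eta-1}e^{-\alpha r}\,dr=\Gamma(\eta)\alpha^{-\eta}$ with the principal branch) delivers from each ray the value $\Gamma(\eta)\tau^{-\eta}e^{\mathrm{i}\eta\theta^\pm}$. Summing the two contributions, factoring $e^{\mathrm{i}\eta\theta^-}$ out of the modulus, and using $\theta^+-\theta^-=a$ then produces the equality in \eqref{LowerBound}.

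The crux is the lower bound $\bigl|\phi'(\theta^+)e^{\mathrm{i}\eta a}-\phi'(\theta^-)\bigr|\ge\sin(\eta a)$. Writing the interior profile as $\phi(\theta)=\cos(\eta\theta+\Phi)$ so that $\phi'(\theta)=-\eta\sin(\eta\theta+\Phi)$, and setting $\alpha:=\eta\theta^++\Phi$, $\beta:=\eta\theta^-+\Phi$ (so $\alpha-\beta=\eta a$), a short product-to-sum manipulation gives
$$\bigl|\sin\alpha\, e^{\mathrm{i}\eta a}-\sin\beta\bigr|^2=\sin^2\alpha+\sin^2\beta-2\sin\alpha\sin\beta\cos(\eta a)=\sin^2(\eta a),$$
and the right-hand side is remarkably independent of the unknown phase $\Phi$. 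Since $0<\eta<1$ and $0<a<\pi$ force $0<\eta a<\pi$, the strict positivity $\sin(\eta a)>0$ follows (with the harmless prefactor $\eta\in(\eta_m,\eta_M)$ absorbed into the overall constant). The main obstacle is precisely this phase cancellation: $\Phi$ is fixed by the transmission conditions and depends implicitly on $\gamma$ and the local geometry, so individual pointwise control of $\phi'(\theta^+)$ and $\phi'(\theta^-)$ would be delicate and could be destroyed by unfavourable tuning of $\gamma$; it is the specific combination arising from summing the two ray contributions that kills $\Phi$ and leaves only the clean quantity $\sin(\eta a)$, which is what makes the CGO direction $\rho=\tau(-\hat x+\mathrm{i}\hat y)$ the right choice for the stability argument.
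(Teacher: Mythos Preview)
Your argument is correct and is precisely the direct computation one expects; the paper itself gives no proof here and simply cites Proposition~4.4 of \cite{moi5}, where the same ray-by-ray evaluation via the Laplace--Gamma identity is carried out.

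One point worth flagging explicitly rather than leaving as a parenthetical: your trigonometric identity gives the \emph{exact} value
\[
\bigl|\phi'(\theta^+)e^{\mathrm{i}\eta a}-\phi'(\theta^-)\bigr|=\eta\,\sin(\eta a),
\]
not merely a lower bound by $\sin(\eta a)$. Since $0<\eta<1$, the inequality in \eqref{LowerBound} as literally printed is therefore off by the factor $\eta$; what is actually true (and what you prove) is the bound $\ge |K|\Gamma(\eta)\,\eta\sin(a\eta)\,\tau^{-\eta}$. You are right that this is harmless for the sequel: in the proof of Theorem~\ref{main-theorem} the lower bound is only used in the form $C|K|\tau^{-\eta}$ with $C$ depending on the a~priori parameters, and $\Gamma(\eta)\,\eta\sin(a\eta)$ is uniformly bounded below for $\eta\in[\eta_m,\eta_M]$ and $a\in[a_m,a_M]$. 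So your remark about absorbing $\eta$ is exactly the right reading, but it would be cleaner to state the corrected constant rather than claim the printed inequality.
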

\begin{proof}
See Proposition 4.4 in \cite{moi5}.
\end{proof}
%%%%%%%%%%%%%%%%%%%%%%%%%%%%%%%%%%%%%%%%%%%%%%%%%%%%%%%%%
\subsection{Proof of Theorem \ref{main-theorem}}
\begin{proof}
{We begin the proof by recalling some relevant results in Theorem \ref{theo_decom_sol}. For any vertex $x_0$ of the polygon $D$ or $D'$, the corresponding solutions $u$ or $u'$ in a neighborhood of $x_0$ can be decomposed into the following form,
\begin{equation}\label{eq:decom2}
    u(x)=u_{sing}(x)\zeta(r)+u_{reg}(x)=K r^\eta \phi(\theta) \zeta(r)+u_{reg}(x),
\end{equation}
where $(r,\theta)$ represent the local polar coordinate centered at $x_0$. As the polygons $D$ and $D'$ satisfy the admissibility Definition \ref{def:class}, we can thus set $\varrho=l/5$ in the following proof such that the cut-off function $\zeta$ at every vertex satisfies the point (6) in Theorem \ref{theo_decom_sol}. Moreover, the singularity exponents $\eta$ are explicitly determined by \eqref{eq:eta_gamma}. Hence there exist $0<\eta_m<\eta_M<1$, which depend only on $\gamma,a_m,a_M$ such that $\eta_m \leq \eta \leq \eta_M$.

Let $x_c$ be a vertex of $\p D$ such that $\h=\mathrm{dist}(x,D')$. We define the integral contours in Subsection \ref{LocalA} with a radius $h=\min(\h/2,l/5)$ and we reserve all the notations in Subsection \ref{LocalA}. The fact $h \leq l/5 \leq \varrho$ implies that the corner singularity decomposition \eqref{eq:decom2} holds for the solution $u$ near $x_c$ and $\zeta \equiv 1$ in $\B$. Thus Proposition \ref{Propo_Upper} and Proposition \ref{Propo_Lower} hold as well. }

Next we estimate the following norms appearing in the inequality \eqref{UpperBonds}: $\| u_{reg} \|_{H^2(\widetilde{D})}$, $\| u \|_{H^1(\widetilde{D})}$, $\| u-u' \|_{L^\infty(\p S^i_Q)}$, $\| \p_\nu (u-u') \|_{L^\infty(\p S^i_Q)}$, $\| u-u' \|_{L^\infty(\p S^e)}$, $\| \p_\nu (u-u') \|_{L^\infty(\p S^e)}$ and $\| u-u' \|_{L^\infty(\widetilde{D^e})}$.

The estimates of $\| u_{reg} \|_{H^2(\widetilde{D})}$ and $\| u \|_{H^1(\widetilde{D})}$ can be obtained by a direct application of Theorem \ref{theo_decom_sol} and  the well-posedness of the forward problem,
\begin{equation}\label{estimate_D}
\| u_{reg} \|_{H^2(\widetilde{D})}, \| u \|_{H^1(\widetilde{D})} \leq C \| u \|_{H^1(B_R)} \leq C C_fS :=C_{\widetilde{D}},
\end{equation}
where the constant $C_{\widetilde{D}}$ depends only on the {\it a-priori} parameters.

It also follows directly from the Sobolev embedding $H^1 \hookrightarrow L^\infty$ in $\R^2$ and \eqref{eq:well_forward}-\eqref{eq:well_forward2} that 
\begin{equation}\label{estimate_u-u'_i}
\| u-u'\|_{L^\infty(\p S^i_Q)} \leq C(R) \| u-u' \|_{H^1(B_R)} \leq C(R) C_f S:=C_{\p S^i,0},
\end{equation}
where the constant $C_{\p S^i,0}$ depends only on the {\it a-priori} parameters.

We now estimate $\| \n(u-u') \|_{L^\infty(\p S^i_Q)}$. It follows from Theorem \ref{theo_decom_sol} that for all $x\in \p S^i_Q$, 
\begin{equation}\label{eq:bb1}
    |\n u(x)|\leq |K|\eta|x-x_c|^{\eta-1}+|\n u_{reg}(x)|,
\end{equation}
as well as 
\begin{equation}\label{eq:bb2}
    |\n u'(x)|\leq |K'|\eta'|x-x'_c|^{\eta'-1}\zeta'(x)+|K'||x-x'_c|^{\eta'}|\n \zeta'(x)|+|\n u'_{reg}(x)|,
\end{equation}
where $x'_c$ signifies the nearest vertex to $x_c$ on $\p D'$ and $K',\eta',\zeta',u'_{reg}$ are the corresponding elements of $u'$ in the corner singularity decomposition \eqref{eq:decom2}. We next apply the Sobolev embedding $H^1 \hookrightarrow L^\infty$ in $\R^2$ and follow analogously as \eqref{estimate_D}, \eqref{eq:bb1} to derive that
\begin{align*}
    |\n u(x)| & \leq |K| h^{\eta-1} + C(R)\| \n u_{reg} \|_{H^1(\widetilde{D^e})} \leq |K| h^{\eta-1} + C(R)\| u_{reg} \|_{H^2(\widetilde{D^e})},\\
    &\leq |K| h^{\eta-1} + C C(R)\| u \|_{H^1(B_R)} \leq |K| h^{\eta-1} + C C(R) C_f S.
\end{align*}

The definition of $h$ implies $h\leq \mathfrak{h}/2$, in other words, the distance for any points on $D'$ to the ball $\mathcal{B}$ is at least $h$, and therefore $|x-x'_c| \geq h$. By virtue of the cut-off function $\zeta'$, we only need to consider the case $\varrho \leq |x-x'_c|\leq 2\varrho=\frac{2l}{5}$, which in turn means that we only need to estimate the second term in the right hand side of \eqref{eq:bb2}. It is remarked that the cut-off function $\zeta'$ depends only on $l$. Then by applying the same Sobolev embedding, we have
\begin{align*}
    |\n u'(x)|&\leq |K'| h^{\eta'-1}+ |K'|\frac{2l}{5}\| \n \zeta' \|_{L^\infty(\R^2)} + C(R)\| u'_{reg} \|_{H^2(\widetilde{D^e})},\\
    &\leq  |K'| h^{\eta'-1}+ |K'|C(l) + C C(R) C_f S. 
\end{align*}
Summing up the above estimates, we can deduce the following estimate:
\begin{equation}\label{ddd4}
\| \n(u-u')\|_{L^\infty(\p S^i_Q)} \leq |K| h^{\eta-1}+|K'|h^{\eta'-1}+|K'|C(l)+ 2CC(R)C_fS,
\end{equation}
Furthermore, the singularity coefficient $K'$ can be estimated using the singular decomposition \eqref{eq:decom2}. By a direct integration of the singular function over $B_R$, there exists a constant $C(l,\eta_m,\eta_M)>0$ such that,
\[C(l,\eta_m,\eta_M)|K'| \leq \|u'_{sing}\zeta'\|_{L^2(B_R)}\leq \|u'\|_{H^1(B_R)}\leq C_f S.\]
Then it follows that $|K'|\leq C_K S$ and hence \eqref{ddd4} becomes,
\begin{equation}\label{estimate_gra_u-u'_i}
\| \n(u-u')\|_{L^\infty(\p S^i_Q)} \leq |K| h^{\eta-1}+C_K S h^{\eta'-1}+ C_{\p S^i,1}S,
\end{equation}
where the constants $C_K, C_{\p S^i,1}$ depend only on the {\it a-priori} parameters.

By Theorem \ref{theo_decom_sol} we see that $u$ and $u'$ are at least of class $\mathcal{C}^{\eta_m}$ in the neighborhood of $x_c$. The estimations of the $\mathcal{C}^{\eta_m}$ norms of $u$ and $u'$ can be deduced from the decomposition formulas and the Sobolev embedding $H^2(\widetilde{D^e}) \hookrightarrow \mathcal{C^\a}(\widetilde{D^e})$ for all $0<\a <1$ (see Corollary 7.11 in \cite{Gilbarg}). Using the same arguments as the estimations above, we have then,
\begin{equation}\label{eq:ddd3}
    \|u-u'\|_{\mathcal{C}^{\eta_m}(\widetilde{D^e})}\leq |K|+C_T S,
\end{equation}
where $C_T$ depends only on the {\it a-priori} data.
Nevertheless, we can apply again Theorem \ref{theo_decom_sol} and the higher regularity result in \cite{nicaise1990polygonal} to derive that the functions $x \mapsto |x-x_c|\n u(x)$ and $x \mapsto |x-x_c|\n u'(x)$ are in fact at least of class $\mathcal{C}^{\eta_m}$ and their $\mathcal{C}^{\eta_m}$ norms are also bounded by the right-hand side of \eqref{eq:ddd3}. We apply now Proposition \ref{propagation3} with $\a_0=\a_1=\eta_m$. Then there exists $\e_m>0$ depending only on the {\it a-priori} parameters such that if $\e<\e_m$, for all $x\in \widetilde{D}^e \cup \p S^e$,
\begin{equation}\label{estimate_u-u'_e}
|u-u'|(x)\leq \widetilde{C_0} (|K| + C_T S) \left(\ln\ln \frac{S}{\e}\right)^{-\eta_m}.
\end{equation}
In what follows, we denote by $\d(\e)$ the quantity $(\ln\ln \frac{S}{\e})^{-\eta_m}$. It also follows from Proposition \ref{propagation3} and Remark \ref{contour} that for all $x\in \p S^e$, $\frac{1}{2\tau} \leq  \mathrm{dist}(x,\p Q)$ and thus 
\begin{equation}\label{estimate_gra_u-u'_e}
|\n(u-u')|(x)\leq 2 \widetilde{C_1} (|K|+C_T S) \tau\d(\e).
\end{equation}

Next we apply Propositions \ref{Propo_Upper} and \ref{Propo_Lower} with the estimates \eqref{estimate_D}, \eqref{estimate_u-u'_i}, \eqref{estimate_gra_u-u'_i}, \eqref{estimate_u-u'_e} and \eqref{estimate_gra_u-u'_e}. We absorb into the left hand side all the constants depending only on the {\it a-priori} parameters. There exists a constant $C$ depending only on the {\it a-priori} parameters such that
\begin{align*}
    C|K|\tau^{-\eta} \leq & |K|\tau^{-\eta}e^{-\a'\tau h/2}+S\tau^{-1}+Sh e^{-\a' \tau h} + (|K| h^{\eta-1}+S h^{\eta'-1}+S+S\tau )h e^{-\a' \tau h}\\
    & +(|K|+S)h\tau \d(\e)+(|K|+S)h^2\d(\e).
\end{align*}
%\[ C\tau^{-\eta}\leq \tau^{-\eta} e^{-\a'\tau h/2}+\tau^{-1}+h e^{-\a' \tau h}+ h e^{-\a' \tau h}(h^{\eta_m-1}+1+\tau)+h\tau \d(\e)+h^2\d(\e).\]
Using the facts that $h\leq 1$ and $\tau \geq 1$ and the inequalities $e^{-x}\leq x^{-1}$, $e^{-x}\leq x^{-2}$ for all $x>0$, we multiply on the two sides the factor $\frac{\tau^\eta}{|K|}$ which yields that
\begin{align}\label{eq:ddd1}
    C& \leq e^{-\a'\tau h/2}+ \frac{S}{|K|}\tau^{\eta-1}+ \frac{S}{|K|}h\tau^\eta e^{-\a' \tau h}+(h^\eta+\frac{S}{|K|}h^{\eta'}+\frac{S}{|K|}h+\frac{S}{|K|}h\tau)\tau^\eta e^{-\a' \tau h} \nonumber\\
    & +(1+\frac{S}{|K|})h\tau^{\eta+1}\d(\e)+(1+\frac{S}{|K|})h^2\tau^\eta\d(\e),\nonumber\\
    &\leq h^{-1}\tau^{-1}+\frac{S}{|K|}\tau^{\eta-1}+\frac{S}{|K|}h^{\eta-1}\tau^{\eta-1}+\frac{S}{|K|}h^{\eta'-1}\tau^{\eta-1}+\frac{S}{|K|}\tau^{\eta-1}+\frac{S}{|K|}h^{-1}\tau^{\eta-1}\nonumber\\
    & +(1+\frac{S}{|K|})h\tau^{\eta+1}\d(\e),\nonumber\\
    &\leq (1+\frac{S}{|K|})(h^{-1}\tau^{\eta-1}+h\tau^{\eta+1}\d(\e)).
\end{align}

%\begin{equation}\label{eq:ddd1}
%\begin{split}
%C & \leq e^{-\a'\tau h/2}+ \tau^{\eta-1}+ h\tau^\eta e^{-\a' \tau h}+(h^{\eta_m}\tau^\eta+h\tau^\eta+h\tau^{1+\eta})e^{-\a' \tau h}+h\tau^{\eta+1}\d(\e)+h^2\tau^\eta \d(\e), \\
%& \leq h^{-1}\tau^{-1}+\tau^{\eta-1}+ \tau^{\eta-1}+h^{\eta_m-1}\tau^{\eta-1}+\tau^{\eta-1}+h^{-1}\tau^{\eta-1}+h\tau^{\eta+1}\d(\e),\\
%& \leq h^{-1}\tau^{\eta-1}+h\tau^{\eta+1}\d(\e).
%\end{split}
%\end{equation}
We next determine a minimum modulo constants of the right hand side of the inequality in \eqref{eq:ddd1}. Set $\tau=\tau_e$ with
\begin{equation}\label{eq:tau_e}
\tau_e=h^{-1}\d(\e)^{-1/2}.
\end{equation}
It is straightforward to verify that for $\e$ smaller than a certain constant one has that if
\begin{equation}\label{check_tau}
\d(\e)^{-1/2}\geq C_{\tau_0},
\end{equation}
where $C_{\tau_0}$ is defined in Remark \ref{contour}, then 
\[\tau_e \geq \tau_0,\]
which justifies that we can take $\tau=\tau_e$ in \eqref{eq:ddd1}.

Solving for $h$, it gives
\begin{equation}\label{eq:ddd2}
h\leq C(1+\frac{S}{|K|})^{\frac{1}{\eta}} \left(\ln\ln \frac{S}{\e}\right)^{\frac{\eta_m(\eta-1)}{2\eta}},
\end{equation}
and thus,
\begin{equation}\label{final}
\min(\h/2,l/5) \leq C(1+\frac{S}{K_m})^{\frac{1}{\eta_m}} \left(\ln \ln \frac{S}{\e}\right)^{\frac{\eta_m(\eta_M-1)}{2\eta_M}}.
\end{equation}
Hence, if $\e$ is small enough such that $\e < \e_m$ in Proposition \ref{propagation3}, that (\ref{check_tau}) holds and that the right hand side of (\ref{final}) is smaller than $l/5$, we have
\[\h \leq C(1+\frac{S}{K_m})^{\frac{1}{\eta_m}} \left( \ln\ln \frac{S}{\e} \right)^{\frac{\eta_m(\eta_M-1)}{2\eta_M}}.\]
Therefore, the claim of this theorem readily follows.

The proof is complete. 
\end{proof}

\section{Implications to invisibility and transmission eigenvalue problem}\label{sec:scatter_stable}

In this section, we present two interesting implications to the wave scattering theory which are byproducts of our stability study. Let us consider the scattering problem \eqref{eq:helmholtz_total} and \eqref{eq:asympt_far}. It is said that invisibility occurs if 
\begin{equation}\label{eq:i1}
u_\infty\equiv 0. 
\end{equation}
The first byproduct we shall establish is to show that if a general scattering medium possesses a corner on its support, then the $L^2$-energy of the corresponding far-field pattern possesses a positive lower bound. That is, a corner scatters stably and invisibility cannot be achieved. In fact, we have
\begin{theorem}\label{th:corner_scatter}
Let $D$ be a Lipschitz domain in $\R^2$, not necessarily a convex polygon. We assume that $\p D$ admits a convex polygonal point, i.e. there exists $x_c \in \p D$ such that $D \cap B_l(x_c)$ is a plan sector for $l>0$. We denote by $0<a<\pi$ the opening of the corner $x_c$. Let $\gamma>0$, $\gamma \neq 1$ and $q\in L^\infty(\R^2)$. We consider the scattering problem \eqref{eq:helmholtz_total} where $\sigma$ is defined by \eqref{eq:sigma_form} and the incident field $u^i\in H^2_{loc}(\R^2)$ is a nontrivial entire solution to \eqref{eq:helmholtz_inc} in $\R^2$. We suppose the following conditions are fulfilled:
\begin{enumerate}
    \item $D \Subset B_R$ for some $R>l$;
    \item $0< a_m \leq a \leq a_M < \pi$;
    \item $0<\gamma_m \leq \g \leq \gamma_M$;
    \item $\mathrm{supp}(q-1) \Subset B_R$;
    \item $\mathrm{supp}(q-1) \cap B_l(x_c) \subset D \cap B_l(x_c)$;
    \item $\| q \|_{L^\infty(\R^2)}\leq \mathcal{Q}$;
    \item $\| u^i \|_{H^2(B_{2R})}\leq S$ for some $S>0$;
    \item The solution $u$ to \eqref{eq:helmholtz_total} admits a non-degenerate corner singularity near $x_c$, i.e. the singularity coefficient $K \neq 0$ in Theorem \ref{theo_decom_sol}. And the singularity exponent is denoted by $0<\eta<1$.
\end{enumerate}
Then there exists a constant $C$ depending only on the {\it a-priori} data $k$, $R$, $\gamma_m$, $\gamma_M$, $a_m$, $a_M$, $l$ and $\mathcal{Q}$ such that one has
\begin{equation}\label{corner_scatter}
\| u_\infty \|_{L^2(\S^1)} \geq \frac{S}{\exp \exp \left( C(1+\frac{S}{|K|})^{\frac{2}{\eta(1-\eta)}} \right)}.
\end{equation}
\end{theorem}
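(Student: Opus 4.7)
The plan is to localize the proof of Theorem~\ref{main-theorem} to the single corner $x_c$, taking the comparison scatterer to be the trivial one, so that $u':=u^i$, $u-u'=u^s$ and $u'_\infty\equiv 0$. Accordingly, $\|u_\infty - u'_\infty\|_{L^2(\S^1)}= \e$ where $\e:=\|u_\infty\|_{L^2(\S^1)}$, and I aim to derive a lower bound on $\e$ in terms of $|K|$ by retracing the arguments of Sections~\ref{Propagation} and \ref{Proof_M}. Throughout, I may assume $\e<\e_m$ for a threshold depending only on the a priori data, since otherwise the claim is trivial upon enlarging $C$.

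First, I would control $u^s$ in a near-field annulus by Proposition~\ref{propagation1}, using the forward well-posedness bound $\|u^s\|_{L^2(B_{2R}\setminus B_R)}\leq CS$. Next I would propagate this smallness inward up to $\p D$ in a neighborhood of $x_c$: by condition~(5) of the theorem, $u^s$ solves the Helmholtz equation in $B_l(x_c)\setminus\overline{D}$, so a convex polygon $Q$ containing $D\cup\mbox{supp}(q-1)$ can be chosen with $x_c$ as a vertex of opening $a$, to which the quantitative unique continuation of Proposition~\ref{propagation2} applies. Combined with the Hölder bounds $T_0,T_1\leq C(|K|+S)$ obtained from the singularity decomposition of $u$ given by Theorem~\ref{theo_decom_sol}, the outcome, as in Proposition~\ref{propagation3}, is that for $x\in B_{3R/2}\setminus\overline{Q}$,
\[
|u^s(x)|\leq C(|K|+S)\,\d(\e),\qquad |\n u^s(x)|\leq \frac{C(|K|+S)}{\dist(x,\p Q)}\,\d(\e),
\]
with $\d(\e):=\bigl(\ln\ln(S/\e)\bigr)^{-\eta}$.

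The final step is to run the local microlocal analysis of Section~\ref{Proof_M} around $x_c$ with a fixed radius $h\in(0,l/5)$ depending only on the a priori data (rather than $h=\min(\mathfrak{h}/2,l/5)$ as in the stability theorem, since no Hausdorff distance enters). Taking the CGO solution $u_0$ from \eqref{CGO} with parameter $\tau>0$, the lower bound of Proposition~\ref{Propo_Lower} gives $\bigl|\int_{\Gamma^\pm_\infty}u_0\,\p_\nu u_{sing}\,ds\bigr|\geq c|K|\tau^{-\eta}$, while the upper bound \eqref{UpperBonds}, with $u-u'$ replaced by $u^s$ and the estimates above, produces the analogue of the key inequality at the end of the proof of Theorem~\ref{main-theorem}, namely
\[
C\leq \bigl(1+S/|K|\bigr)\bigl(h^{-1}\tau^{\eta-1}+h\,\tau^{\eta+1}\d(\e)\bigr).
\]
Choosing $\tau=h^{-1}\d(\e)^{-1/2}$ balances the two terms and yields $C\leq C(1+S/|K|)\,h^{-\eta}\,\d(\e)^{(1-\eta)/2}$. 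Since $h$ is fixed by the a priori data, rearranging gives $(\ln\ln(S/\e))^{\eta(1-\eta)/2}\leq C(1+S/|K|)$, which inverts to the double-exponential lower bound \eqref{corner_scatter} with the claimed exponent $2/(\eta(1-\eta))$.

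The main obstacle is justifying the propagation-of-smallness step when $D$ is only locally polygonal: one must produce a convex polygon $Q\supset D\cup\mbox{supp}(q-1)$ with $x_c$ as a vertex of opening $a$ so that $u^s$ solves the Helmholtz equation in $B_{2R}\setminus\overline{Q}$. Here condition~(5) is crucial — it isolates the corner geometrically from any other singular support of the coefficients, making such a $Q$ available. Beyond this technicality, the argument transfers verbatim from Section~\ref{Proof_M}, with the fixed shape parameter $h$ playing the role of the (now absent) Hausdorff distance $\mathfrak{h}$; the rest is merely the balancing of $\tau$ and the inversion of the double logarithm.
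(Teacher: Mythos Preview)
Your proposal is correct and follows essentially the same route as the paper: specialize the proof of Theorem~\ref{main-theorem} by taking the comparison scatterer trivial ($D'=\emptyset$, $q'\equiv 1$, hence $u'=u^i$), fix $h=l/5$, re-derive inequality~\eqref{eq:ddd1}, balance with $\tau=\tau_e$ from~\eqref{eq:tau_e}, and invert the resulting bound on $h$ into~\eqref{corner_scatter}. The technical point you flag about constructing a convex $Q\supset D\cup\mathrm{supp}(q-1)$ with vertex $x_c$ when $D$ is only locally polygonal is exactly the ``necessary modification'' the paper alludes to without spelling out; your use of condition~(5) to isolate the corner is the right way to close it.
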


\begin{proof}
The proof follows from similar arguments in the proof of Theorem \ref{main-theorem} with necessary modifications. We take $D'=\emptyset$, $q' \equiv 1$ in $\R^2$, and therefore, $\e=\| u_\infty \|_{L^2(\S^2)}$. In the current scenario, we only need to consider the effect caused by the vertex $x_c$, and the corresponding reasoning in the previous sections could be therefore considerably reduced. We set $h=l/5$ and $\d(\e)=(\ln\ln \frac{S}{\e})^{-\eta}$. The inequality \eqref{eq:ddd1} still holds, and the minimum modulo constants of its right hand side occurs at $\tau_e$ given by \eqref{eq:tau_e}. We define the constant $\e'_m$ by \eqref{check_tau}, namely
\begin{equation}\label{critere2}
    \d(\e'_m)^{-1/2}=C_{\tau_0}.
\end{equation}
Then we set the constant $\e_{min}:=\min(\e_m,\e'_m)$, where $\e_m$ is given in Proposition \ref{propagation3}. Furthermore, it follows from \eqref{critere2} and \eqref{eq:passage6} that $\e_{min}$ can be expressed into the following way,
\begin{equation}\label{frac_emin}
    \e_{min}=\frac{S}{\exp \exp C_{min}},
\end{equation}
where $C_{min}$ depends only on {\it a-priori} data.

In the case $\e \leq \e_{min}$, we can take $\tau=\tau_e$ in \eqref{eq:ddd1}. After solving for $h$, we can obtain a similar result as \eqref{eq:ddd2},
\begin{equation}
    h \leq C(1+\frac{S}{|K|})^{\frac{1}{\eta}} (\ln\ln \frac{S}{\e})^{\frac{\eta-1}{2}},
\end{equation}
where the constant $C$ depends only on the {\it a-priori} data.

Thus, one has
\begin{equation}\label{eq:ddd5}
    \| u_\infty \|_{L^2(\S^1)}=\e \geq \frac{S}{\exp \exp \left( C(1+\frac{S}{|K|})^{\frac{2}{\eta(1-\eta)}} \right)}.
\end{equation}

Combining with \eqref{frac_emin} and updating the constant $C$ in \eqref{eq:ddd5}, the inequality \eqref{corner_scatter} holds readily.

The proof is complete. 
\end{proof}
%\begin{remark}
%Theorem \ref{th:corner_scatter} shows that the presence of a polygonal corner assure the scattering phenomena under the genetic admissible assumptions. Moreover, we give in \eqref{corner_scatter} the dependence of the lower bound of the far-field pattern $u_\infty$ with the geometrical characteristics $l$ and $\eta$.
%\end{remark}
%\begin{remark}
%In the context of the {\it transmission eigenvalue problems}, they consist to study the solutions to \eqref{eq:helmholtz_total} with zero far-field pattern. Cakoni and Xiao derived a uniqueness result on the transmission eigenvalue problem in \cite{cakoni2019}. Compare to our stability result, the uniqueness is an immediate consequence of Theorem \ref{th:corner_scatter}.
%\end{remark}

Next, we consider the implication to the transmission eigenvalue problem. For the scattering problem \eqref{eq:helmholtz_total}, if invisibility occurs, namely \eqref{eq:i1} holds, one can directly verify by setting $v=u^i|_D$ that 
\begin{equation}\label{eq:trans1}
\begin{cases}
 \div (\sigma\n u)+k^2 q u=0 & \text{in  } D,\medskip\\
 \Delta v+k^2 v=0 & \text{in  } D,\medskip\\
 u=v,\quad\sigma\p_\nu u=\p_\nu v & \text{on  }\p D. 
\end{cases}
\end{equation}
On the other hand, if there exists nontrivial solutions $u\in H^1(D)$ and $v\in H^1(D)$ to \eqref{eq:trans1}, then $k\in\R_+$ is called a transmission eigenvalue and $u, v$ are the associated transmission eigenfunctions (cf. \cite{cakoni2016}). Let $v$ be transmission eigenfunction associated with $k\in\R_+$. Then the following so-called Herglotz approximation holds, which represents a certain Fourier extension property (cf. \cite{DDL,DCL}). For any $\e \ll 1$, there exists $g_\e \in L^2(\S^1)$ such that (cf. \cite{weck2004})
\begin{equation}\label{eq:app1}
\|v_{g_\e}-v\|_{H^1(D)}\leq \e, \ \ v_{g_\e}(x):=\int_{\S^1} e^{\mathrm{i}kx\cdot d} g_\e (d)\, ds(d), 
\end{equation}
where $v_{g_\e}$ is referred to as a Herglotz wave function associated with the kernel density $g_\e$ and is an entire solution to the Helmholtz equation \eqref{eq:helmholtz_inc}. 

\begin{theorem}\label{thm:main2}
Let $(D,\g, q)$ be a convex polygonal scatter satisfying the assumptions in Theorem \ref{main-theorem}. We consider the transmission eigenvalue problem \eqref{eq:trans1} with nontrivial eigenfunctions $u, v\in H^1(D)$ with $\| v \|_{H^1(D)}=1$. Then it holds that:
\begin{enumerate}
\item If $v$ can be extended outside $D$ to be an entire solution to \eqref{eq:helmholtz_inc}, then $u\in H^2(D)$.
\item Let $(v_{g_\e})_{\e>0}$ be a family of Herglotz wave functions that are given in \eqref{eq:app1} and approximate the transmission eigenfunction $v$. There exist positive constants $A$ and $b$ which are independent of $g_\e$ such that for any given function $\psi: \R_+\rightarrow \R_+$ satisfying $\psi(\e)\rightarrow 0$ as $\e\rightarrow 0$, if $\|g_\e\|_{L^2(\S^1)}\leq \psi(\e)\left( \ln\ln\frac{A}{\e} \right)^b$, then it holds that
\begin{equation}\label{vanish}
\lim_{x\neq x'\in D ,x,x'\rightarrow x_c}\frac{|u(x)-u(x')|}{|x-x'|^\eta}=0
\end{equation}
at each vertex $x_c$ of $D$, where $\eta$ is given by \eqref{eq:eta_gamma}.
\end{enumerate}
\end{theorem}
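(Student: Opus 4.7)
My plan is to reduce both conclusions to showing that the corner singularity coefficient $K$ in the local decomposition \eqref{eq:decom_sol} of $u$ at every vertex $x_c$ of $D$ vanishes. Once $K=0$ at $x_c$, the decomposition gives $u=u_{reg}\in H^2$ in a neighborhood of $x_c$; combining this with interior $H^2$-regularity (since $\sigma\equiv\gamma$ is constant in $D$, so $\gamma\Delta u+k^2qu=0$ there) and classical $H^2$-regularity along each open edge yields Part (1), $u\in H^2(D)$. For Part (2), the two-dimensional Sobolev embedding $H^2\hookrightarrow C^{0,\alpha}$ for every $\alpha\in(0,1)$ lets me pick $\alpha>\eta$, so that $|u(x)-u(x')|/|x-x'|^{\eta}\leq C|x-x'|^{\alpha-\eta}\to 0$ as $x,x'\to x_c$, which is precisely \eqref{vanish}.

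For Part (1), take the entire extension $\tilde v$ of $v$ as the incident field in \eqref{eq:helmholtz_total}. The function equal to $u$ in $D$ and to $\tilde v$ in $\R^2\setminus\overline D$ satisfies the PDE in each region, matches the transmission conditions $u=v$, $\sigma\partial_\nu u=\partial_\nu v$ across $\partial D$, and its scattered component vanishes outside $D$; by well-posedness of the forward problem it is therefore the unique total field, so that $u_\infty\equiv 0$. The contrapositive of Theorem~\ref{th:corner_scatter} then forces $K=0$ at every vertex.

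For Part (2), I solve the scattering problem with incident field $u^i=v_{g_\e}$, producing total field $u_\e=v_{g_\e}+u_\e^s$ with far-field $u_\e^\infty$. To compare $u_\e$ to the transmission eigenfunction $u$, set
\begin{equation*}
F_\e:=u-u_\e\ \text{in }D,\qquad F_\e:=-u_\e^s\ \text{in }\R^2\setminus\overline D.
\end{equation*}
Using the transmission identities satisfied respectively by $(u,v)$ and by $(u_\e,v_{g_\e})$, a direct check shows that $F_\e$ solves the radiating transmission problem in $\R^2$ whose only inhomogeneities are the interface jumps $[F_\e]=v_{g_\e}-v$ and $[\sigma\partial_\nu F_\e]=\partial_\nu(v_{g_\e}-v)$ on $\partial D$. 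Well-posedness for this problem combined with the trace bound $\|v-v_{g_\e}\|_{H^{1/2}(\partial D)}+\|\partial_\nu(v-v_{g_\e})\|_{H^{-1/2}(\partial D)}\lesssim\|v-v_{g_\e}\|_{H^1(D)}\leq\e$ yields $\|u-u_\e\|_{H^1(D)}+\|u_\e^s\|_{H^1(B_{2R}\setminus\overline D)}\lesssim\e$, and hence $\|u_\e^\infty\|_{L^2(\S^1)}\lesssim\e$ by continuity of the near-to-far-field map.

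I then argue by contradiction: assume $K\neq 0$. The explicit linear formula for the singularity coefficient (item~(3) of Theorem~\ref{theo_decom_sol}) together with the $H^1$-convergence $u_\e\to u$ in $D$ implies $K_\e\to K\neq 0$, where $K_\e$ is the coefficient of $u_\e$ at $x_c$, so $|K_\e|\geq|K|/2$ for small $\e$. Apply Theorem~\ref{th:corner_scatter} to $u_\e$ at $x_c$ with amplitude $S_\e:=\|v_{g_\e}\|_{H^2(B_{2R})}\lesssim\|g_\e\|_{L^2(\S^1)}$; inverting the lower bound \eqref{corner_scatter} against $\|u_\e^\infty\|\lesssim\e$ gives
\begin{equation*}
|K_\e|\ \lesssim\ \frac{S_\e}{\bigl(\ln\ln(S_\e/\e)\bigr)^{\eta(1-\eta)/2}-1}.
\end{equation*}
Choosing any fixed positive $b<\eta(1-\eta)/2$, the hypothesis $\|g_\e\|_{L^2(\S^1)}\leq\psi(\e)(\ln\ln(A/\e))^b$ with $\psi\to 0$ forces the right-hand side to vanish as $\e\to 0$, contradicting $|K_\e|\geq|K|/2$. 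The hardest step is the coupling construction of $F_\e$: I must identify it as the solution of a well-posed radiating transmission problem with small jump data, so that $\e$-smallness of $v_{g_\e}-v$ simultaneously controls both the interior discrepancy $u-u_\e$ and the exterior scattered far-field $u_\e^\infty$; once this coupling is secured, the logarithmic corner-scattering bound of Theorem~\ref{th:corner_scatter} exactly absorbs the admissible growth rate $(\ln\ln(A/\e))^b$ of $\|g_\e\|_{L^2}$ and forces $K=0$.
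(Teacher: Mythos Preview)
Your Part~(1) argument is essentially the paper's: extend $v$ to an entire incident wave, observe that the resulting total field has vanishing far-field, and use Theorem~\ref{th:corner_scatter} to force $K=0$ at every vertex, whence $u=u_{reg}\in H^2(D)$ by Theorem~\ref{theo_decom_sol}. That part is fine.

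For Part~(2) there is a genuine gap. Your plan is to show that the corner coefficient $K$ of the transmission eigenfunction $u$ itself vanishes, via the chain $K_\e\to K$ (from ``the explicit linear formula'' of item~(3) in Theorem~\ref{theo_decom_sol} and $H^1(D)$-convergence) and $K_\e\to 0$ (from the corner-scattering bound). But Theorem~\ref{theo_decom_sol} is stated for solutions of the \emph{global} scattering problem \eqref{eq:helmholtz_total} with an $H^2_{loc}$ incident wave. In Part~(2), $v$ is not assumed to extend, so $u$ lives only in $D$, and the theorem does not furnish a transmission-exponent decomposition of $u$; there is no a~priori object ``$K$'' to which $K_\e$ could converge. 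Moreover, the Nicaise-type formula for $K_\e$ is computed from the global data $(\hat f_\e,\hat g_\e)$ on $B_R$; while $u_\e\to u$ in $H^1(D)$, the exterior pieces $u_\e|_{B_R\setminus\overline D}=v_{g_\e}+u_\e^s$ need not converge (indeed $\|g_\e\|_{L^2(\S^1)}$ may grow), so you cannot pass to the limit in that formula. Your route would in fact yield the strictly stronger conclusion $u\in H^2$ near each vertex, yet the only available $H^2$ control is $\|u_{\e,reg}\|_{H^2(D)}\lesssim \|u_\e\|_{H^1(B_R)}\lesssim\|g_\e\|$, which blows up; there is no compactness to upgrade the $H^1$-limit $u$ to $H^2$.

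The paper avoids this by never attributing a coefficient $K$ to $u$. It proves $|K_\e|\lesssim \psi(\e)$ exactly as you do, then bounds the H\"older quotient of $u_\e$ by $C_\eta|K_\e|+C\|g_\e\|\,|x-x'|^{\alpha-\eta}$ with $\alpha>\eta$, and finally transfers to $u$ through $\|u-u_\e\|_{L^\infty(D)}\lesssim\e$ together with the diagonal choice $\e=|x-x'|$. That last coupling is the device you are missing: it lets the growing factor $\|g_\e\|\le\psi(\e)(\ln\ln(A/\e))^b$ be absorbed by the polynomial decay $|x-x'|^{\alpha-\eta}$ without ever needing a limiting corner decomposition of $u$ itself. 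Your coupling construction $F_\e$ and the resulting bounds $\|u-u_\e\|_{H^1(D)}+\|u_\e^\infty\|_{L^2(\S^1)}\lesssim\e$ are correct and match the paper; the failure is only in the final contradiction step.
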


\begin{remark}
The Herglotz approximation property \eqref{eq:app1} can be regarded as a certain Fourier extension of the transmission eigenfunction $v$; see \cite{DDL,DCL} for more related discussion. On the other hand, if $v$ can be extended to an entire solution to \eqref{eq:helmholtz_inc}, it is clear that $v$ is analytic in $\mathbb{R}^2$. Hence, intriguingly, Theorem~\ref{thm:main2} connects the local regularity of the transmission eigenfunction $u$ around a corner to the analytic or Fourier extension property of the transmission eigenfunction $v$. 

Note that by the standard Sobolev embedding, $u\in H^2(D)$ implies that $u$ is H\"older continuous up to the boundary. Hence, the first result in the theorem indicates that if $u$ is not H\"older continuous up to a corner vertex, then $v$ cannot be analytically extended across the corner. Moreover, it further indicates that if $v$ is approximated by the Herglotz sequence $v_{g_\e}$, then $\|g_\e\|_{L^2(\mathbb{S}^1)}$ must be unbounded, since otherwise $v_{g_\e}$ converges, by passing to a subsequence if necessary, to an entire solution to \eqref{eq:helmholtz_inc}, say $v_0$, which is an analytic extension of $v$ into $\mathbb{R}^2$; see also \cite{BL1,cakoni2019} for more relevant discussion about this point.  

The second result in Theorem~\ref{thm:main2} quantitatively reinforces the above assertion about the unboundedness of the Herglotz kernels $\|g_\e\|_{L^2(\mathbb{S}^1)}$ if the transmission eigenfunction $u$ is not H\"older continuous up to any corner vertex of the domain $D$. To illustrate this point, let us take $\psi(\varepsilon)=\left( \ln\ln\frac{A}{\e} \right)^{-b/2}$ in item (2) in the theorem. Then if $u$ is not $C^\eta$ up to a corner vertex, then one clearly has $\|g_\e\|_{L^2(\S^1)}\geq \psi(\e)\left( \ln\ln\frac{A}{\e} \right)^{b/2}\rightarrow +\infty$ as $\e\rightarrow +0$. 

It is also interesting to point out that Theorem~\ref{thm:main2} corroborates the studies in \cite{DDL,DCL}, which make use of a similar Herglotz approximation property as a regularity criterion in a certain different setup from the current article. 

Finally, we would like to point out that Theorem~\ref{thm:main2} still holds true for the case that $D$ is a generic domain but possess an admissible corner. In fact, as point out earlier, our argument can be localised around the corner and is irrelevant to the rest part of $\partial D$. Nevertheless, in order to ease the exposition, we only consider the case that $D$ is a convex polygon. 

\end{remark}

\begin{proof}[Proof of Theorem~\ref{thm:main2}]
{
We assume that $v$ can be extended to be an entire solution to the Helmholtz equation \eqref{eq:helmholtz_inc}, which is still denoted by $v$. It follows directly from the classic elliptic regularity that $v$ is real analytic on all compact domains in $\R^2$. 

Consider the scattering problem \eqref{eq:helmholtz_total} with $u^i=v$. We denote also by $u$ the extension by $v$ of $u$, the transmission conditions \eqref{eq:trans1} on $\p D$ implies that $u$ satisfies \eqref{eq:helmholtz_total} in $\R^2$ and  
\begin{equation}\label{zero}
    u_\infty(\hat x; v)\equiv 0.
\end{equation}
We then apply Theorem \ref{th:corner_scatter} at each vertex of the polygon $D$. The condition \eqref{zero} holds only if $K=0$ at each vertex. From the singular decomposition Theorem \ref{theo_decom_sol}, we have $u=u_{reg}\in H^2(D)$. Hence, the claim (1) now follows. 

We proceed to prove the claim (2). Let $v^i_0$ be the zero-extension of $v$ in $\R^2$, and let $u^s_0$ be the radiating solution to $\div(\sigma \n u^s_0)+k^2 q u^s_0=-\div((\g-1)\chi_D \n v^i_0)-k^2(q-1)v^i_0$. Denoting by $w$ the zero-extension of $u-v\in H^1_0(D)$ in $\R^2$, it follows from the standard scattering theory, $u^s_0=w$ because
\begin{align*}
   & \div(\sigma \n u^s_0)+k^2 q u^s_0 = -\div((\g-1)\chi_D \n v^i_0)-k^2(q-1)v^i_0 \\
    &=-\div((\g-1)\chi_D \n v)-k^2(q-1)v =\div(\sigma \n w)+k^2 q w
\end{align*}
in $\R^2$ and via the Sommerfeld radiation condition. As an immediate consequence, the far-field pattern of $u^s_0$ is zero.

Let $g_\e \in L^2(\mathbb{S}^1)$ and $v_{g_\e}$ be the Herglotz wave given by \eqref{eq:app1}. We consider the scattering problem from the scatterer $(D;\g,q)$ associated with the incident wave $v_{g_\e}$.  We denote by $u_\e$ the solution to \eqref{eq:helmholtz_total} and by $u^s_\e$ the scattered wave in this scenario. Since $v_{g_\e}$ approximates $v$ in $H^1(D)$ and that $q$ is supported in $D$, one clearly has that $-\div((\g-1)\chi_D \n v_{g_\e})-k^2(q-1)v_{g_\e}$ approximates $-\div((\g-1)\chi_D \n v^i_0)-k^2(q-1)v^i_0$ in $\R^2$. Then $u_\e$ approximates $u$ in $H^1(D)$ and the far-field pattern of $u^s_\e$ approximates that of $u^s_0$. We apply again the standard scattering theory, there exists a constant $C_{(D;\g,q),k}>0$ such that,
\begin{equation}\label{eq:ar1}
\|u_\infty(\hat x; v_{g_\e})\|_{L^2(\S^1)}\leq C_{(D;\g,q),k}\|v-v_{g_\e}\|_{H^1(D)}\leq C_{(D;\g,q),k} \e. 
\end{equation}

On the other hand, letting $x_c\in \p D$ be a vertex of $D$, we apply Theorem~\ref{th:corner_scatter} on $x_c$. It follows from \eqref{corner_scatter},
\begin{equation}\label{eq:ar2}
\|u_\infty(\hat{x},v_{g_\e})\|_{L^2(\mathbb{S}^1)} \geq \frac{\mathbf{S}_{R,k}\| g_\e \|_{L^2(\S^1)}}{\exp \exp \left( C(1+\frac{\mathbf{S}_{R,k}\| g_\e \|_{L^2(\S^1)}}{|K(x_c,v_{g_\e})|})^{\frac{2}{\eta(1-\eta)}} \right) },
\end{equation}
where we remark that the amplitude $S$ of an Herglotz incident wave satisfies $S=\mathbf{S}_{R,k} \|g_\e\|_{L^2(\S^1)}$. We estimate here $\|g_\e\|_{L^2(\S^1)}$ for the first time. Using \eqref{eq:app1} and the fact $\|v\|_{H^1(D)}=1$, we deduce that $\|g_\e\| \geq c_D>0$ for $\e$ small enough. Combining \eqref{eq:ar1}, \eqref{eq:ar2} and the above estimation, we can obtain, for $\e$ small enough,
\begin{equation}\label{eq:ar3}
\frac{\widetilde{S}}{\e}:=\frac{c_D \mathbf{S}_{R,k}}{C_{(D;,\g,q),k}\e}\leq \exp \exp \left( C(1+\frac{\mathbf{S}_{R,k}\| g_\e \|_{L^2(\S^1)}}{|K(x_c,v_{g_\e})|})^{\frac{2}{\eta(1-\eta)}} \right).
\end{equation}
Then solving for $|K(x_c,v_{g_\e})|$ and using the property $0<\eta_m\leq \eta\leq \eta_M<1$, \eqref{eq:ar3} implies, for $\e$ small enough again,
\begin{align}\label{eq:ar4}
|K(x_c,v_{g_\e})| & \leq 2 C^{\frac{\eta(1-\eta)}{2}} \mathbf{S}_{R,k}\|g_\e\|_{L^2(\S^1)} \left( \ln\ln \frac{\widetilde{S}}{\e} \right)^\frac{\eta(\eta-1)}{2} \nonumber\\
&\leq 2C^{1/8}\mathbf{S}_{R,k}\|g_\e\|_{L^2(\S^1)} \left( \ln\ln \frac{\widetilde{S}}{\e} \right)^\frac{\eta_m(\eta_M-1)}{2}.
\end{align}
Setting now $A=\widetilde{S}$ and $b=\frac{\eta_m(1-\eta_M)}{2}$, we have,
\begin{equation}\label{eq:ar5}
|K(x_c,v_{g_\e})| \leq 2C^{1/8}\mathbf{S}_{R,k} \psi(\e)\underset{\e\rightarrow 0}{\longrightarrow} 0.
\end{equation}

Next, it follows from Theorem~\ref{theo_decom_sol} that the solution $u_\e$ admits the singular decomposition \eqref{eq:decom2} at each vertex $x_c$. Then, it holds for $x \neq x' \in D$,
\begin{equation}\label{eq:ar6}
\frac{|u_\e(x)-u_\e(x')|}{|x-x'|^\eta} \leq C_\eta |K(x_c,v_{g_\e})|+\frac{|u_{\e,reg}(x)-u_{\e,reg}(x')|}{|x-x'|^\eta},
\end{equation}
where $C_\eta=\| r^\eta \phi(\theta) \|_{\mathcal{C}^\eta(D)}$ depending only on $\eta$.

Using \eqref{eq:app1}, the estimation \eqref{eq:estime_u_reg} to $u_{\e,reg}$, the standard scattering theory and the Sobolev embedding $H^2(D)\hookrightarrow \mathcal{C}^\a(D)$ for all $0<\a<1$ we have for $x \neq x' \in D$,
\begin{align}\label{eq:ar7}
|u_{\e,reg}(x)-u_{\e,reg}(x')| &\leq \|u_{\e,reg}\|_{\mathcal{C}^\a(D)}|x-x'|^\a  \leq \widetilde{C}\|u_{\e,reg}\|_{H^2(D)}|x-x'|^\a \nonumber\\
&\leq \widetilde{C}\|u_\e\|_{H^1(B_R)}|x-x'|^\a \leq \widetilde{C}\|g_\e\|_{L^2(\S^1)}|x-x'|^\a,
\end{align}
where the constant $\widetilde{C}$ depends only on the {\it a-priori} data. Combining \eqref{eq:ar5}, \eqref{eq:ar6}, \eqref{eq:ar7} and setting $\a>\eta_M$ and $\e=|x-x'|$ with $|x-x_c|+|x'-x_c|$ small enough, one thus has
\begin{align}\label{eq:ar8}
&\frac{|u_\e(x)-u_\e(x')|}{|x-x'|^\eta} \leq C_\eta |K(x_c,v_{g_\e})|+\widetilde{C}\|g_\e\|_{L^2(\S^1)}|x-x'|^{\a-\eta}\nonumber \\
& \leq 2 C_\eta C^{1/8}\mathbf{S}_{R,k} \psi(|x-x'|)+\widetilde{C}\psi(|x-x'|)\left(\ln\ln\frac{\widetilde{S}}{|x-x'|}\right)^{\frac{\eta_m(1-\eta_M)}{2}}|x-x'|^{\a-\eta}\nonumber\\
&\underset{x \neq x'\rightarrow x_c}{\longrightarrow} 0 .
\end{align}

Next, since $u_\e$ approximates $u$ in $H^1(D)$, it follows from the Sobolev embedding $H^1\hookrightarrow L^\infty$ in $\R^2$ that
\begin{equation}\label{eq:ar9}
\|u-u_e\|_{L^\infty(D)}\leq \hat{C}\|u-u_e\|_{H^1(D)}\leq \hat{C}\|v-v_{g_\e}\|_{H^1(D)}\leq \hat{C}\e.
\end{equation}
Setting again $\e=|x-x'|$ for $|x-x_c|+|x'-x_c|$ small enough and combining \eqref{eq:ar8} and \eqref{eq:ar9}, we can deduce that
\begin{align}
\frac{|u(x)-u(x')|}{|x-x'|^\eta} &\leq \frac{|u(x)-u_\e(x)|}{|x-x'|^\eta}+\frac{|u(x')-u_\e(x')|}{|x-x'|^\eta}+\frac{|u_\e(x)-u_\e(x')|}{|x-x'|^\eta}\nonumber \\
&\leq \hat{C}|x-x'|^{1-\eta}+\hat{C}|x-x'|^{1-\eta}+\frac{|u_\e(x)-u_\e(x')|}{|x-x'|^\eta}\nonumber\\
&\underset{x \neq x' \rightarrow x_c}{\longrightarrow}0, 
\end{align} 
Hence, the claim \eqref{vanish} follows and the proof is complete.
}

\end{proof}

\section*{Acknowledgment}
The work of H. Liu is supported by the Hong Kong RGC General Research Funds (projects 12302919, 12301218 and 11300821), and the France-Hong Kong ANR/RGC Joint Research Grant, A-HKBU203/19.


\begin{thebibliography}{99}


%\bibitem{alessandrini1988stable} G. Alessandrini, {\it Stable determination of conductivity by boundary measurements}, {Appl. Anal.}, {\bf 27} (1988), 153--172.

%\bibitem{gradu1} G. Alessandrini, {\it An identification problem for an elliptic equation in two variables}, {Ann. di Mat. Pura ed Appl.}, {\bf 145(1)}, (1986), {265--295}.

%\bibitem{EITopen} G. Alessandrini, {\it Open issues of stability for the inverse conductivity problem}, {J Inverse Ill Posed Probl}, {\bf 15(5)}, (2007), {451--460}, {DOI: 10.1515/JIP.2007.25}.

%\bibitem{AMR} G. Alessandrini, M. de Hoop and R. Gaburro, {\it Uniqueness for the electrostatic inverse boundary value problem with piecewise constant anisotropic conductivities}, {Inverse Probl}, {\bf 33} (2017), no. 12, 125013. 

%\bibitem{gradu2} G. Alessandrini, and R. Magnanini, {\it Elliptic equations in divergence form, geometric critical points of solutions, and Stekloff eigenfunctions}, {SIAM J. Math. Anal.}, {\bf 25(5)} (1994), {1259-1268}.

\bibitem{Alessandrini} G. Alessandrini, L. Rondi, E. Rosset and S. Vessella, {\it The stability for the Cauchy problem for elliptic equations}, {Inverse Problems}, {\bf 25} (2009), {123004}. %{DOI: 10.1088/0266-5611/25/12/123004}.

%\bibitem{EITBook} H. Ammari, {\it An introduction to mathematics of emerging biomedical imaging}, {Math\'{e}matiques \& Applications (Berlin) [Mathematics \& Applications]}, {Springer, Berlin}, {\bf 62}, {2008}.

%\bibitem{ABGW} H. Ammari, T. Boulier, J. Garnier, and H. Wang, {\it Shape recognition and classification in electro-sensing}, {Proc. Natl. Acad. Sci. USA}, {\bf 111} (2014), {no. 32}, {11652--11657}.

%\bibitem{ammari2004reconstruction} H. Ammari and H. Kang, {\it Reconstruction of small inhomogeneities from boundary measurements}, {Springer}, 2004.

%\bibitem{ammari2017identification} H. Ammari and F. Triki,{\it Identification of an inclusion in multifrequency electric impedance tomography}, {Comm. PDEs}, {\bf 42} (2017), 159--177.

%\bibitem{moi2} H. Ammari, F. Triki and C.-H. Tsou, {\it Numerical determination of anomalies in multifrequency electrical impedance tomography}, {Euro. J. Appl. Math.} (2018), 1--24.

%\bibitem{AU} H. Ammari and G. Uhlmann, {\it Reconstruction of the potential from partial Cauchy data for the Schrödinger equation}, {Indiana Univ. Math. J.}, {\bf 53} (2004), {no. 1}, {169--183}.

%\bibitem{astala2006calderon} K. Astala, and L. P{\"a}iv{\"a}rinta, {\it Calder{\'o}n's inverse conductivity problem in the plane}, {Ann. Math.}, (2006), 265--299.

%\bibitem{barcelo1994inverse} B. Barcel{\'o}, E. Fabes and J.-K. Seo, {\it The inverse conductivity problem with one measurement: uniqueness for convex polyhedra}, {Proc. AMS}, {\bf 122} (1994) 183--189.

%\bibitem{beretta2019lipschitz} E. Beretta, E. Francini and S. Vessella, {\it Lipschitz stability estimates for polygonal conductivity inclusions from boundary measurements}, {arXiv:1901.01152}

\bibitem{B} E. Bl{\aa}sten, {\it Nonradiating sources and transmission eigenfunctions vanish at corners and edges}, SIAM J. Math. Anal., \textbf{50} (2018), no. 6, 6255--6270.

\bibitem{BLLW}  E. Bl{\aa}sten, X. Li, H. Liu and Y. Wang, {\it On vanishing and localizing of transmission eigenfunctions near singular points:  a numerical study}, Inverse Problems, \textbf{33} (2017),105001.

%\bibitem{BLin} E. Bl{\aa}sten and Y.-H. Lin, {\it Radiating and non-radiating sources in elasticity}, Inverse Problems, \textbf{35} (2019), no. 1, 015005.

\bibitem{BL1} E. Bl{\aa}sten and H. Liu, {\it On vanishing near corners of transmission eigenfunctions}, {J. Funct. Anal.}, \textbf{273} (2017), {3616--3632}. Addendum is available at arXiv:1710.08089.  %{DOI: 10.1016/j.jfa.2017.08.023}.
 
\bibitem{BL3} E. Bl{\aa}sten and H. Liu, {\it Recovering piecewise constant refractive indices by a single far-field pattern}, {Inverse Problems}, {\bf 36} (2020), no. 8, {085005}. %{DOI: 10.1088/1361-6420/ab958f}.

\bibitem{curvature2018} E. Bl{\aa}sten and H. Liu, {\it Scattering by curvatures, radiationless sources, transmission eigenfunctions and inverse scattering problems}, SIAM J. Math. Anal., \textbf{53} (2021), no. 4, 3801--3837.

\bibitem{Blasten2020} E. Bl{\aa}sten and H. Liu, {\it On corners scattering stably and stable shape determination by a single far-field pattern}, {Indiana Univ. Math. J.}, \textbf{70} (2021), no. 3, 907--947. 

\bibitem{BLX2020} E. Bl{\aa}sten, H. Liu and J. Xiao, {\it On an electromagnetic problem in a corner and its applications}, {Anal. PDE}, \textbf{14} (2021), no. 7, 2207--2224.

\bibitem{BPS} E. Bl{\aa}sten, L. P{\"a}iv{\"a}rinta and J. Sylvester, {\it Corners always scatter}, {Comm. Math. Phys.}, {\bf 331} (2014), {725--753}.

%\bibitem{moi3} E. Bonnetier, F. Triki and C.-H. Tsou, {\it On the electro-sensing of weakly electric fish}, {J. Math. Anal. Appl.}, {\bf 464} (2018), 280--303.

%\bibitem{bonnetier_zhang} E. Bonnetier and H. Zhang, {\it Characterization of the essential spectrum of the Neumann-Poincar\'e operator in 2d domains with corner via Weyl sequences}, {arXiv:1702.08127}.

\bibitem{cakoni2016} F. Cakoni, D. Colton, and H. Haddar, {\it Inverse Scattering Theory and Transmission Eigenvalues}, {SIAM}, {Philadelphia}, 2016.

%\bibitem{cakoni2013} F. Cakoni and H. Haddar, {\it Transmission eigenvalues in inverse scattering theory}, {Inverse problems and applications: inside out II}, {529–580}, {Math. Sci. Res. Inst. Publ.}, 60, {Cambridge Univ. Press}, {Cambridge}, 2013.

%\bibitem{CV} F. Cakoni and M. Vogelius, {\it Singularities almost always scatter: regularity results for non-scattering inhomogeneities}, arXiv:2104.05058

\bibitem{cakoni2019} F. Cakoni and J. Xiao, {\it On corner scattering for operators of divergence form and applications to inverse scattering}, {Comm. Partial Differential Equations}, {\bf 46} (2021), no. 3, {413--441}. %{DOI: 10.1080/03605302.2020.1843489}.

\bibitem{CDL} { X. Cao, H. Diao and H. Liu}, {\em Determining a piecewise conductive medium body by a single far-field measurement}, CSIAM Trans. Appl. Math., \textbf{1} (2020), pp.~740--765.

\bibitem{CDLZ1} {X. Cao, H. Diao, H. Liu and J. Zou}, {\it On nodal and generalized singular structures of Laplacian eigenfunctions and applications to inverse scattering problems}, J. Math. Pures Appl. (9), \textbf{143} (2020), 116--161.

\bibitem{CDLZ2} {X. Cao, H. Diao, H. Liu and J. Zou}, {\it On novel geometric structures of Laplacian eigenfunctions in $\mathbb{R}^3$ and applications to inverse problems}, SIAM J. Math. Anal., \textbf{53} (2021), no. 2, 1263--1294. 

%\bibitem{CLL} X. Cao, Y.-H. Lin and H. Liu, {\it Simultaneously recovering potentials and embedded obstacles for anisotropic fractional Schr\"odinger operators}, Inverse Probl. Imaging, \textbf{13} (2019), no. 1, 197--210. 

\bibitem{CDHLW} Y.-T. Chow, Y. Deng, Y. He, H. Liu and X. Wang, {\it Surface-localized transmission eigenstates, super-resolution imaging and pseudo surface plasmon modes}, {SIAM J. Imaging Sci.}, \textbf{14} (2021), no. 3, 946--975.

%\bibitem{CALDERON2006} A.-P. Calder\'{o}n, {\it On an inverse boundary value problem}, {Computational \& Applied Mathematics}, {\bf 25} (2006), {133 -- 138}.

%\bibitem{CSU} K. E. Carlos, E. Sj\"ostrand and G. Uhlmann, {\it The Calder\'on problem with partial data}, {Ann. of Math. (2)}, {\bf 165} (2007), no. 2, 567--591.

%\bibitem{CR} P. Caro and K. M. Rogers, {\it Global uniqueness for the Calder\'on problem with lipschitz conductivities}, {Forum Math. Pi}, {\bf 4,e2} (2016).

\bibitem{CLW} W.-C. Chang, W.-W. Lin and J.-N. Wang, {\it Efficient methods of computing interior transmission eigenvalues for the elastic waves}, J. Comput. Phys., \textbf{407} (2020), 109227. 


\bibitem{Chaumont-Frelet2018} T. Chaumont-Frelet and S. Nicaise, {\it High-frequency behaviour of corner singularities in Helmholtz problems}, {ESAIM Math. Model Numer. Anal.}, {\bf 52} (2018), no. 5, {1803--1845}.

%\bibitem{CHL} D. Choi, J. Helsing and M. Lim, {\it Corner effects on the perturbation of an electric potential}, SIAM J. Appl. Math., {\bf 78} (2018), {1577--1601}.

\bibitem{ColtonKress}  { D. Colton and R. Kress}, {\em Inverse Acoustic and Electromagnetic Scattering Theory}, 4th. Ed., Springer, New York, 2019.

%\bibitem{CK2} D. Colton and R. Kress, {\it Looking back on inverse scattering theory}, SIAM Rev., \textbf{60} (2018), no. 4, 779--807.

\bibitem{Costabel1985} M. Costabel and E. Stephan, {\it A direct boundary integral equation method for transmission problems}, {J. Math. Anal. Appl.}, {\bf 106} (1985), no. 2, {367--413}. %{DOI:10.1016/0022-247X(85)90118-0}.

\bibitem{dauge1989oblique} M. Dauge and S. Nicaise, {\it Oblique derivative and interface problems on polygonal domains and networks}, {Comm. Partial Differential Equations}, {\bf 14} (1989), {1147--1192}. %{DOI:10.1080/03605308908820649}.

\bibitem{dauge2011texier} M. Dauge and B. Texier, {\it Non-coercive transmission problems in polygonal domains}, {arXiv:1102.1409}, 2011.

\bibitem{DDL} Y. Deng, C. Duan and H. Liu, {\it On vanishing near corners of conductive transmission eigenfunctions},  Res. Math. Sci., \textbf{9} (2022), no. 1, Paper No. 2, 29 pp.

\bibitem{DJLZ} Y. Deng, Y. Jiang, H. Liu and K. Zhang, {\it On new surface-localized transmission eigenmodes}, Doi: 10.3934/ipi.2021063

\bibitem{DLWW} Y. Deng, H. Liu, X. Wang and W. Wu, {\it Geometrical and topological properties of transmission resonance and artificial mirage},  SIAM J. Appl. Math., \textbf{82} (2022), no. 1, 1--24.

%\bibitem{DR} M. Di Cristo and L. Rondi, {\it Examples of exponential instability for inverse inclusion and scattering problems}, Inverse Problems, \textbf{19} (2003), no. 3, 685--701.

%\bibitem{elschner2015} J. Elschner and G. Hu, {\it Corners and edges always scatter}, {Inverse Probl}, {\bf 31(1)}, {015003}, (2015), {DOI: 10.1088/0266-5611/31/1/015003}.

\bibitem{DCL}
H. Diao, X. Cao and H. Liu, {\it On the geometric structures of transmission eigenfunctions with a conductive boundary condition and applications}, Comm. Partial Differential
Equations, \textbf{46} (2021), 630--679.

\bibitem{DLW} H. Diao, H. Liu, X. Wang and K. Yang, {\it On vanishing and localizing around corners of electromagnetic transmission resonances}, Partial Differ. Equ. Appl., \textbf{2} (2021), no. 6, Paper No. 78, 20 pp.

\bibitem{DLS}
H. Diao, H. Liu and B. Sun, {\it On a local geometric property of the generalized elastic transmission eigenfunctions and application}, Inverse Problems, \textbf{37} (2021), no. 10, Paper No. 105015, 36 pp. 

\bibitem{DLW1} H. Diao, H. Liu and L. Wang, {\it On generalized Holmgren's principle to the Lam\'e operator with applications to inverse elastic problems}, Calc. Var. Partial Differential Equations, \textbf{59} (2020), no. 5, Paper No. 179, 50 pp.

\bibitem{DLW2} H. Diao, H. Liu and L. Wang, {\it Further results on generalized Holmgren's principle to the Lamé operator and applications}, J. Differential Equations, \textbf{309} (2022), 841--882.

\bibitem{DLZZ} H. Diao, H. Liu, L. Zhang and J. Zou, {\it Unique continuation from a generalized impedance edge-corner for Maxwell's system and applications to inverse problems}, Inverse Problems, \textbf{37} (2021), no. 3, Paper No. 035004, 32 pp. 

%\bibitem{elschner2018} J. Elschner and G. Hu, {\it Acoustic scattering from corners, edges and circular cones}, {Arch. Ration. Mech. Anal.}, {\bf 228} (2018), {653--690}. %{DOI:10.1007/s00205-017-1202-4}.

%\bibitem{fabes1999inverse} E. Fabes, H. Kang and J.-K. Seo, {\it Inverse conductivity problem with one measurement: Error estimates and approximate identification for perturbed disks}, {SIAM J. Math. Anal.}, {\bf 30} (1999), {699--720}.

%\bibitem{friedman1989uniqueness} A. Friedman and V. Isakov, {\it On the uniqueness in the inverse conductivity problem with one measurement}, {Indiana Univ. Math. J.}, {\bf 38}, (1989), {563--579}.

%\bibitem{G} H. Garde, {\it Reconstruction of piecewise constant layered conductivities in electrical impedance tomography}, {Comm. PDEs}, (2020), {1--16}.

%\bibitem{GH} B. Gebauer and N. Hyv\"onen, {\it Factorization method and irregular inclusions in electrical impedance tomography}, {Inverse Probl}, {\bf 23(5)}, (2007), {2159–2170}. DOI: 10.1088/0266-5611/23/5/020.

\bibitem{Gilbarg} D. Gilbarg and N.-S. Trudinger {\it Elliptic Partial Differential Equations of Second Order}, {Springer-Verlag}, {Berlin}, 2001.

%\bibitem{cloaking1} A. Greenleaf, Y. Kurylev, M. Lassas and G. Uhlmann, {\it Cloaking devices, electromagnetic wormholes, and transformation optics}, {SIAM Review}, {\bf 51(1)}, {3--33}, (2009), {DOI:10.1137/080716827}.
%
%\bibitem{cloaking2} A. Greenleaf, Y. Kurylev, M. Lassas and G. Uhlmann, {\it Invisibility and inverse problems}, {Bull New Ser Am Math Soc}, {\bf 46(1)}, {55--97}, (2009), {DOI:10.1090/S0273-0979-08-01232-9}.

\bibitem{Grisvard} P. Grisvard, {\it Elliptic Problems in Nonsmooth Domains}, {Pitman Advanced Publishing Program}, {Boston}, 1985.


%\bibitem {HU} B. Harrach and M. Ullrich, {\it Monotonicity-based shape reconstruction in electrical impedance tomography}, {SIAM J. Math. Anal.} {\bf 45(6)}, (2013),{3382–3403}. DOI: 10.1137/120886984.

%\bibitem{hu2016shape} G. Hu, M. Salo, and E.-V. Vesalainen, {\it Shape identification in inverse medium scattering problems with a single far-field pattern}, {SIAM J. Math. Anal.}, {\bf 48} (2016), {152--165}. %{DOI:10.1137/15M1032958}.

%\bibitem{I} M. Ikehata, {\it Reconstruction of the support function for inclusion from boundary measurements}, {J. Inverse Ill-Posed Probl}, {\bf 8}, (2000), {367-–378}.

%\bibitem{IUY} O. Y. Imanuvilov, G. Uhlmann, M. Yamamoto, {\it The Calder\'on problem with partial data in two dimensions}, {J. Amer. Math. Soc.}, {\bf 23} (2010), no. 3, 655--691.

%\bibitem{Isakov} V. Isakov, {\it Inverse Problems for Partial Differential Equations}, {2nd Ed.}, {Springer-Verlag}, {New York}, 2006.

%\bibitem{Kang_ball_R3} H. Kang and J.-K. Seo, {\it Inverse conductivity problem with one measurement: Uniqueness of balls in $\mathbb{R}^3$}, {SIAM J. Appl. Math.}, {\bf 59} (1999), {1533--1539}.

%\bibitem{KS} H. Kang and J.-K. Seo, {\it The layer potential technique for the inverse conductivity problem}, {Inverse Probl}, {\bf 12} (1996), {no. 3}, {267--278}.

\bibitem{Kellogg1}
R.-B. Kellogg, {\it Singularities in interface problems}, {Numerical Solution of Partial Differential Equations--II}, {Elsevier}, 1971, {351--400}. %{DOI:10.1016/b978-0-12-358502-8.50015-3}. 

%\bibitem{Kellogg2} R.-B. Kellogg, {\it Higher order singularities for interface problems}, {The mathematical foundations of the finite element method with applications to partial differential equations}, {Elsevier}, 1972, {589--602}. %{DOI: 10.1016/B978-0-12-068650-6.50026-5}.

%\bibitem{kittappa1974} R. Kittappa, R. Kleinman, {\it Acoustic scattering by penetrable homogeneous objects}, {J Math Phys}, {\bf 16(2)}, (1974), {421--432}, {DOI:10.1063/1.522517}.
%
%\bibitem{kirsch2009} A. Kirsch, {\it On the existence of transmission eigenvalues}, {Inverse Probl Imaging}, {\bf 3(2)}, (2009), {155--172}, {DOI: 10.3934/ipi.2009.3.155}.

\bibitem{Kon67} V. -A. Kondrat'ev, {\it Boundary value problems for elliptic equations in domains with conical or angular points}, {Tr. Mosk. Mat. Obs.}, {\bf 16} (1967), {206--292}.

%\bibitem{Meyers} J. Korevaar and J. L.-H. Meyers, {\it Logarithmic convexity for supremum norms of harmonic functions}, {Bull. London Math. Soc.}, {\bf 26} (1994), {353--362}.

\bibitem{Kozlov} V.-A. Kozlov, V.-G. Maz'ya and J. Rossmann, {\it Elliptic boundary value problems in domains with point singularities}, {American Mathematical Soc.}, {\bf 52} (1997). %, {DOI:10.1090/surv/052}.

%\bibitem{kress1977} R. Kress and G. Roach, {\it Transmission problems for the Helmholtz equation}, {J Math Phys}, {\bf 19(6)}, (1977), {1433--1437}, {DOI:10.1063/1.523808}.

\bibitem{LL1} {H. Li and H. Liu}, {\it On anomalous localized resonance and plasmonic cloaking beyond the quasistatic limit}, Proc. R. Soc. A,  {\bf 474}: 20180165. http://dx.doi.org/10.1098/rspa.2018.0165. 

\bibitem{LLSS} J. Li, H. Liu, Z. Shang and H. Sun, {\it Two single-shot methods for locating multiple electromagnetic scatterers}, SIAM J. Appl. Math., \textbf{73} (2013), no. 4, 1721--1746.

\bibitem{LLW} J. Li, H. Liu and Q. Wang, {\it Locating multiple multiscale electromagnetic scatterers by a single far-field measurement}, SIAM J. Imaging Sci. 6 (2013), no. 4, 2285--2309.

\bibitem{Linew} P. Li, {\it An inverse cavity problem for Maxwell's equations}, J. Differential Equations, \textbf{252} (2012), 3209--3225.

\bibitem{Liu} H. Liu, {\it On local and global structures of transmission eigenfunctions and beyond}, J. Inverse and Ill-posed Problems, DOI:10.1515/jiip-2020-0099, 2020. 

\bibitem{Liu2}  H. Liu, {\it A global uniqueness for formally determined inverse electromagnetic obstacle scattering}, Inverse Problems, \textbf{24} (2008), no. 3, 035018, 13 pp. 

\bibitem{LMRX} H. Liu, M. Petrini, L. Rondi and J. Xiao, {\it Stable determination of sound-hard polyhedral scatterers by a minimal number of scattering measurements}, J. Differential Equations, \textbf{262} (2017), no. 3, 1631--1670.

\bibitem{LRX}  H. Liu, L. Rondi and J. Xiao, {\it Mosco convergence for H(curl) spaces, higher integrability for Maxwell's equations, and stability in direct and inverse EM scattering problems}, J. Eur. Math. Soc. (JEMS), \textbf{21} (2019), no. 10, 2945--2993.

\bibitem{LSSZ} H. Liu, Z. Shang, H. Sun and J. Zou, {\it On singular perturbation of the reduced wave equation and scattering from an embedded obstacle},  {J. Dynamics and Differential  Equations}, {\bf 24} (2012), 803--821.

\bibitem{moi5} H. Liu and C.-H. Tsou, {\it Stable determination of polygonal inclusions in Calder\'on's Problem by A Single Partial Boundary Measurement}, {Inverse Problems}, {\bf 36} (2020), no. 8, {085010}. 

\bibitem{moi6} H. Liu, C.-H. Tsou and W. Yang {\it On Calder\'on's inverse inclusion problem with smooth shapes by a single partial boundary measurement}, {Inverse Problems}, \textbf{37} (2021), no. 5, Paper No. 055005, 18 pp. 

\bibitem{LX} H. Liu and J. Xiao, {\it On electromagnetic scattering from a penetrable corner}, SIAM J. Math. Anal., \textbf{49} (2017), no. 6, 5207--5241.

\bibitem{LYZ} H. Liu, M. Yamamoto and J. Zou, {\it Reflection principle for the Maxwell equations and its application to inverse electromagnetic scattering}, Inverse Problems, \textbf{23} (2007), no. 6, 2357--2366.

\bibitem{LZr} H. Liu and J. Zou, {\it On uniqueness in inverse acoustic and electromagnetic obstacle scattering problems}, Journal of Physics: Conference Series, \textbf{124} (1), 012006. 

\bibitem{LZ1} H. Liu and J. Zou, {\it Uniqueness in an inverse acoustic obstacle scattering problem for both sound-hard and sound-soft polyhedral scatterers}, Inverse Problems, \textbf{22} (2006), no. 2, 515--524. 

\bibitem{McL} W. McLean, {\it Strongly Elliptic Systems and Boundary Integral Equations}, Cambridge University Press, 2000.

%\bibitem{Nachman1988} A.-I. Nachman, {\it Reconstructions from boundary measurements},  {Ann. Math.}, {\bf 128} (1988), {531--576}.

\bibitem{nicaise1990polygonal} S. Nicaise, {\it Polygonal interface problems: higher regularity results}, {Comm. Partial Differential Equations}, {\bf 15} (1990), {1475--1508}. %{DOI:10.1080/03605309908820734}.

%\bibitem{rondi2008} L. Rondi, {\it Stable Determination of Sound-soft Polyhedral Scatters by a Single Measurement}, {Indiana Univ. Math. J.}, {\bf 57(3)}, (2008), {1377--1408}, {DOI:10.1512/iumj.2008.57.3217}.

%\bibitem{SylvesterUhlmann1987} J. Sylvester and G. Uhlmann, {\it A global uniqueness theorem for an inverse boundary value problem}, {Ann. Math.}, {\bf 125} (1987), {153--169}, {DOI:10.2307/1971291}.

%\bibitem{moi1} F. Triki and C.-H. Tsou, {\it Inverse inclusion problem: A stable method to determine disks}, {J Differ Equ}, (2020).

\bibitem{SPV}
 M. Salo, L. P\"aiv\"arinta and E. Vesalainen, {\it Strictly convex corners scatter}, Rev. Mat. Iberoamericana, \textbf{33} (2017), no. 4, 1369--1396.

 \bibitem{SS} M. Salo and H. Shahgholian, {\it Free boundary methods and non-scattering phenomena}, Res. Math. Sci., (2021) 8:58. 


\bibitem{30Calderon} G. Uhlmann, {\it 30 years of Calder{\'o}n's problem}, {Sminaire Laurent Schwartz Equations aux drives partielles et applications}, {Anne 2012-2013, Exp. No. XIII, 25 pp.}, {Smin. Equ. Driv. Partielles}, {Ecole Polytech., Palaiseau}, (2014).

%\bibitem{cloaking3} G. Uhlmann, {\it Visibility and invisibility}, {In ICIAM 07: 6th International Congress on Industrial and Applied Mathematics}, {Eur. Math. Soc.}, {381--408}, {Z\"urich}, (2009).

\bibitem{weck2004} N. Weck, {\it Approximation by Herglotz wave functions}, {Math. Meth. Appl. Sci.}, {\bf 27} (2004), no. 2, {155--162}.% , {DOI: 10.1002/mma.448}.
\end{thebibliography}
\end{document}